\pgfplotsset{compat=1.16}
\theoremstyle{plain}
\newtheorem{theorem}{Theorem}[section]
\newtheorem{lemma}[theorem]{Lemma}
\theoremstyle{definition}
\newtheorem{remark}[theorem]{Remark}
\newtheorem{example}[theorem]{Example}
\numberwithin{equation}{section}
\newcommand{\ud}{\ensuremath{\mathrm{d} }}
\newcommand{\R}{\mathbb{R}}
\newcommand{\InPrd}[1]{\left\langle #1 \right\rangle}
\begin{document}

\title[On the radius of self-repellent fractional Brownian motion]{On the radius of self-repellent \\fractional Brownian motion}

\author{Le Chen \and Sefika Kuzgun \and Carl Mueller \and Panqiu Xia}

\address{
  Department of Mathematics and Statistics\\
  Auburn University\\
  Auburn, AL, 36849, USA
}
\email{le.chen@auburn.edu}
\email{pqxia@auburn.edu}

\address{
  Department of Mathematics\\
  University of Rochester\\
  Rochester, NY, 513424, USA
 }
 \email{skuzgun@math.rochester.edu}
 \email{carl.e.mueller@rochester.edu}

\keywords{Fractional Brownian motion, self-avoiding, self-repellent, Girsanov theorem.}

\begin{abstract}
  We study the radius of gyration $R_T$ of a self-repellent fractional Brownian
  motion $\left\{B^H_t\right\}_{0\le t\le T}$ taking values in $\mathbb{R}^d$.
  Our sharpest result is for $d=1$, where we find that with high probability,
  \begin{equation*}
      R_T \asymp T^\nu, \quad \text{with $\nu=\frac{2}{3}\left(1+H\right)$.}
  \end{equation*}
  For $d>1$, we provide upper and lower bounds for the exponent $\nu$, but these
  bounds do not match.
\end{abstract}

\maketitle

\section{Introduction}

\textit{Self-avoiding random walks} are among the most extensively studied
models in statistical physics. A variant, the \textit{self-repellent walk} (also
known as the \textit{weakly self-avoiding walk}), provides a weaker version of
the self-avoiding walk. This variation adjusts the probability distribution of a
simple random walk by imposing penalties on paths with self-intersections. In
contrast, a self-avoiding walk is a random walk that strictly prohibits any
self-intersections. The \textit{Domb-Joyce model}~\cite{domb.joyce:72:cluster}
constitutes a discrete-time version of the self-repellent walk, while the
\textit{Edwards model}~\cite{edwards:65:statistical} provides a continuous
alternative, known as the \textit{self-repellent Brownian motion}. For an
in-depth treatment of self-avoiding walks, readers are directed to the
monograph~\cite{madras.slade:93:self-avoiding}, lecture
notes~\cite{bauerschmidt.duminil-copin.ea:12:lectures}, and the
survey~\cite{slade:19:self-avoiding} that highlights recent advancements in the
field.

This paper primarily focuses on the Edwards model associated with self-repellent
\textit{fractional Brownian motions} (fBm's). This particular model provides an
apt framework to analyze the properties of polymer molecules in good solvents,
as discussed in detail in~\cite{biswas.cherayil:95:dynamics}. Extensive
investigation of the self-repellent fBm has been undertaken
in~\cite{grothaus.oliveira.ea:11:self-avoiding,
bornales.oliveira.ea:13:self-repelling, bock.bornales.ea:15:scaling}, contingent
on the presence of a square-integrable (self-intersection) local time for the
fBm. However, it is known that the fBm with the \textit{Hurst parameter} $H$
might not possess a square-integrable local time if $dH \geq 1$---for instance,
if $d = 2$ and $H = 1/2$; see~\cite{rosen:87:intersection,
hu.nualart:05:renormalized}. As observed in~\cite{mueller.neuman:23:radius}, the
local time characteristic in the self-repellent Brownian motion is not
essential. Instead, one can work with the \textit{occupation measure}. In
particular, we consider the occupation measure of balls of radius 1.  Thus, we
penalize paths that come close to their past positions, rather than passing
through exactly the same points. Substituting local time with the occupation
measure should theoretically maintain the outcome, which is validated
specifically within dimension $1$. This insight facilitates the characterization
of self-repellent fBm's across all dimensions $d \geq 1$ and for the entire
range of the Hurst parameter $H \in (0,1)$.

Let $\left\{ B^H_t\right\}_{t\ge0}$ be a fractional Brownian motion with Hurst
index $H \in (0,1)$, taking values in $\mathbb{R}^d$.  That is,
$B^H_t=\left(B^{H,1}_t,\dots,B^{H,d}_t\right)$ where
$\left(B^{H,i}_\cdot\right)_{i=1}^{d}$ are independent one-dimensional
fractional Brownian motions with Hurst index $H$. Thus, each $B^{H,i}_\cdot$ is
a centered Gaussian process on $[0,\infty)$ with covariance
\begin{equation*}
  \mathbb{E}\left[B^{H,i}_s B^{H,i}_t\right]=\frac{1}{2}\left(t^{2H}+s^{2H}-|t-s|^{2H}\right).
\end{equation*}
We define the occupation time as follows:
\begin{equation}\label{E:LTy}
  L_T(y) \coloneqq \left|\left\{t\in[0,T]: B^H_t\in\mathbf{B}_1(y)\right\}\right|
         = \int_0^T \mathbf{1}_{\mathbf{B}_1(y)} \left(B^H_t\right) \ud t,
\end{equation}
where $|S|$ denotes the Lebesgue measure of the set $S$, and $\mathbf{B}_r(y)$
is the open ball in $\mathbb{R}^d$, centered at $y$, of radius $r > 0$. Define
\begin{equation}\label{E:ET}
  \mathcal{E}_T \coloneqq \exp\left(-\beta\int_{\mathbb{R}^d}L_T(z)^2\ud z\right)
\end{equation}
and for an event $A$, let
\begin{equation}\label{E:Qt}
  \mathbb{Q}_T(A) \coloneqq \frac{1}{Z_T}\mathbb{E}^{\mathbb{P}_T}\left[\mathbf{1}_A\mathcal{E}_T\right], \hspace{1cm}
  Z_T             \coloneqq \mathbb{E}^{\mathbb{P}_T}\left[\mathcal{E}_T\right].
\end{equation}
Then, under probability measure $\mathbb{Q}_T$, $\left\{B^H_t \colon 0\leq t
\leq T\right\}$ is a self-repellent fBm. \bigskip

In this paper, we will investigate the \textit{radius of gyration} $R_T$
(see~\cite{fixman:62:radius}) of the self-repellent fractional Brownian motion
$B_t^H$.
\begin{align}\label{E:GT}
  R_T \coloneqq \left[\frac{1}{T}\int_{0}^{T}\left|B_t^H-\overline{B}_T^H\right|^2\ud t\right]^{1/2} \quad \text{with} \quad
  \overline{B}_T^H \coloneqq \frac{1}{T}\int_{0}^{T}B_t^H\ud t.
\end{align}
It is worth noting that another customary radius is the \textit{mean square
end-to-end distance}, which is often seen in mathematical  papers:
\begin{equation*}
  \left(\mathbb{E}^{\mathbb{Q}_T}\left[\left|B^H_T\right|^2\right]\right)^{1/2},
  \qquad \text{where $\left|B_T^H\right| \coloneqq \sqrt{\left(B_T^{H,1}\right)^2 + \cdots + \left(B_T^{H,d}\right)^2}$\:.}
\end{equation*}

Physicists often base their reasoning on universality, namely the belief that
changing the details of a model will not affect its large-scale behavior. We
believe that the exact definition of the radius is unlikely to change our final
result. It is expected that, maybe up to a logarithmic correction,
\begin{align*}
  R_T \asymp T^{\nu}
\end{align*}
for some exponent $\nu$ depending on the dimension $d$ and the Hurst parameter
$H$. It has been conjectured in~\cite{bornales.oliveira.ea:13:self-repelling}
and~\cite{bock.bornales.ea:15:scaling} that
\begin{align}\label{E:conjecture}
   \nu = \frac{2(1+H)}{2+d}.
\end{align}

In the case of one dimension, the radius exhibits ballistic behavior, $\nu=1$,
for the self-repellent random walk, as shown in~\cite{bolthausen:90:on,
greven.hollander:93:variational}. For fBm, the corresponding result is as
follows: \medskip

\begin{theorem}[$d=1$]\label{T:d=1}
  Let $B^H$ be a one-dimensional fBm with Hurst parameter $H\in (0,1)$, and let
  $\mathbb{Q}_T$ and $R_T$ be defined as in~\eqref{E:Qt} and~\eqref{E:GT},
  respectively. Then, for any $\beta > 0$, there exist nonrandom constants
  $T_{\beta} \geq e$, and $C_*$, $C^*$, $C_{\ref{E:logzt}}>0$ such that the
  following inequality holds whenever $T \geq T_{\beta}$:
  \begin{align*}
    \mathbb{Q}_T \left( C_*  \beta^{1/3} T^{\frac{2(1+H)}{3}} \leq R_T \leq C^* \beta^{1/3} T^{\frac{2(1+H)}{3}} \right)
    \ge 1 - 2 \exp\left(-  C_{\ref{E:logzt}} \beta^{2/3} T^{\frac{2(2-H)}{3}}\right),
  \end{align*}
  where the constants $C_*$, $C^*$ are given in~\eqref{E:Cstar} and
  $C_{\ref{E:logzt}}$ in~\eqref{E:logzt}.
\end{theorem}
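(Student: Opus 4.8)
The plan is to turn the two-sided radius estimate into a competition between the Gaussian ``entropy'' cost of stretching the path and the ``energy'' gained by reducing self-intersections. Under the free law $\mathbb{P}$ the path has radius $R_T \asymp T^H$ and occupation energy $\int_{\R} L_T(z)^2\,\ud z \asymp T^{2-H}$; stretching the path to a spatial scale $v$ lowers the energy to order $T^2/v$ at a Gaussian cost of order $v^2 T^{-2H}$. Balancing $v^2 T^{-2H}$ against $\beta\,T^2/v$ fixes the optimal scale $v = R^* \asymp \beta^{1/3} T^{2(1+H)/3}$, at which both quantities equal $\asymp \beta^{2/3} T^{2(2-H)/3}$, precisely the exponent appearing in the probability bound. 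I would therefore establish three ingredients: (1) a partition-function lower bound $Z_T \ge \exp(-C'\beta^{2/3}T^{2(2-H)/3})$; (2) a Gaussian concentration bound $\mathbb{P}(R_T \ge r) \le \exp(-c\,r^2 T^{-2H})$ valid for $r \gg T^H$; and (3) a deterministic inequality showing that a small radius forces a large occupation energy. The cutoff $T_\beta$ enters because these estimates require $R^* \gg T^H$ and $R^* \gg 1$, which hold once $T$ is large.

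For ingredient (3), if $R_T \le r$ then Chebyshev's inequality applied to the measure $\ud t/T$ shows $B^H_t$ lies in an interval of length $\asymp r$ for at least $3T/4$ of the time; since $\int_{\R} L_T(z)\,\ud z = 2T$ always, Cauchy--Schwarz over that interval yields $\int_{\R} L_T(z)^2\,\ud z \ge c\,T^2/r$. Granting (1)--(3), the two tails follow quickly. For the upper tail I bound $\mathcal{E}_T \le 1$, so $\mathbb{Q}_T(R_T \ge C^* R^*) \le Z_T^{-1}\,\mathbb{P}(R_T \ge C^* R^*)$, and (2) together with (1) makes this $\le \exp(-(c(C^*)^2 - C')\beta^{2/3}T^{2(2-H)/3})$, which is small once $C^*$ is large. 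For the lower tail, on $\{R_T \le C_* R^*\}$ ingredient (3) gives $\mathcal{E}_T \le \exp(-(c/C_*)\beta^{2/3}T^{2(2-H)/3})$, so $\mathbb{Q}_T(R_T \le C_* R^*) \le Z_T^{-1}\exp(-(c/C_*)\beta^{2/3}T^{2(2-H)/3})$, which is small once $C_*$ is small; a union bound produces the factor $2$.

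Ingredient (2) I would obtain from the Borell--TIS inequality: writing $R_T = \|B^H - \overline{B}^H_T\|_{L^2([0,T],\,\ud t/T)}$ as a seminorm of the Gaussian random element $B^H \in L^2[0,T]$, concentration gives $\mathbb{P}(R_T \ge \mathbb{E}R_T + u) \le \exp(-u^2/(2\sigma^2))$, where $\mathbb{E}R_T \le (\mathbb{E}R_T^2)^{1/2} \asymp T^H$ and the weak variance $\sigma^2$, the operator norm of the centred covariance, is $\asymp T^{2H}$ by the scaling of fBm; taking $u \asymp r$ with $r \gg T^H$ finishes it.

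The heart of the argument, and the step I expect to be the main obstacle, is ingredient (1). I would apply a Cameron--Martin shift by the deterministic drift $\phi_t = v\,t/T$ with $v \asymp R^*$. The translation formula gives $Z_T = \mathbb{E}[\exp(-\beta\int_{\R} L_T(B^H+\phi;z)^2\,\ud z - I(\phi) - \tfrac12\|\phi\|_{\mathcal{H}}^2)]$, with $I(\phi)$ the centred Gaussian associated to $\phi$, and Jensen's inequality then yields $Z_T \ge \exp(-\beta\,\mathbb{E}\int_{\R} L_T(B^H+\phi;z)^2\,\ud z - \tfrac12\|\phi\|_{\mathcal{H}}^2)$ since $\mathbb{E}I(\phi) = 0$. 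Two estimates remain: the Cameron--Martin norm computation $\|\phi\|_{\mathcal{H}}^2 \asymp v^2 T^{-2H}$ (the linear function lies in the fBm Cameron--Martin space for every $H \in (0,1)$), and the mean occupation-energy bound $\mathbb{E}\int_{\R} L_T(B^H+\phi;z)^2\,\ud z \lesssim T^2/v$. The latter is the delicate point: expanding the square and integrating out $z$ via $\int_{\R}\mathbf{1}_{|a-z|<1}\mathbf{1}_{|b-z|<1}\,\ud z = (2-|a-b|)_+$ reduces it to $\int_0^T\!\int_0^T \mathbb{E}\,(2 - |B^H_s - B^H_t + v(s-t)/T|)_+\,\ud s\,\ud t$, and since $B^H_s - B^H_t \sim N(0,|s-t|^{2H})$ this becomes a one-dimensional integral in the lag $\tau = |s-t|$ of $\min(1,\tau^{-H})\exp(-c(v\tau/T - 2)_+^2\,\tau^{-2H})$. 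Showing this integral is $\lesssim T/v$ requires a case analysis according to whether the correlation lag $\tau_* = (T/v)^{1/(1-H)}$ exceeds $1$, i.e.\ whether $H < 1/2$ or $H > 1/2$, together with careful control of the short-lag regime; this is where the bulk of the technical work lies. With both estimates in hand and $v = c_0 R^*$, the two exponents equal $\asymp \beta^{2/3}T^{2(2-H)/3}$ and ingredient (1) follows.
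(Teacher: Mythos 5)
Your proposal follows essentially the same route as the paper: your three ingredients are exactly the paper's Lemma~\ref{lmm:zt} (partition-function lower bound obtained by tilting with a linear drift of speed $\lambda = v/T \asymp \beta^{1/3}T^{(2H-1)/3}$ and applying Jensen's inequality, with the same balancing of $v^2T^{-2H}$ against $\beta T^2/v$), the Gaussian-tail bound on $q^{(>)}_{r,T}$, and the deterministic Chebyshev--Cauchy--Schwarz bound on $q^{(<)}_{r,T}$, assembled in the same way into the two-tail estimate with $c = C_{\ref{E:logzt}}\beta^{2/3}T^{2(2-H)/3}$. The only cosmetic differences are that you phrase the tilt as a Cameron--Martin translation (valid, since the linear drift lies in the Cameron--Martin space for all $H\in(0,1)$, which is precisely the content of the Norros--Valkeila--Virtamo construction the paper invokes via Theorems~\ref{T:mar} and~\ref{T:gir}) and that you apply Borell's concentration inequality directly to the $L^2$ seminorm $R_T$ rather than first reducing $\{R_T\ge r\}$ to $\left\{\sup_{t\le T}\left|B^H_t\right|\ge r/2\right\}$ as the paper does.
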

\medskip

In other words, the radius of the one-dimensional self-repellent fBm is
completely solved:
\begin{align*}
  R_T \asymp T^\nu \quad \text{with}\quad \nu=\frac{2}{3}\left(1+H\right)
\end{align*}
with high probability for large $T$. This theorem proves the conjectured
claim~\eqref{E:conjecture} for $d=1$, if we use the  radius of gyration. In
particular, when $H = 1/2$, $R_T \asymp T$, which coincides with the
classical result for the Brownian motion/random walk in~\cite{bolthausen:90:on,
greven.hollander:93:variational}.

The analogous question in dimensions $d=2,3,4$ is completely open even in the
special case of self-repellent random walk/Brownian motion. In dimensions $d\geq
5$, the \textit{lace expansion} was successfully used to show that $\nu=1/2$ for
self-repellent random walk; see~\cite{hara.slade:92:self-avoiding,
brydges.spencer:85:self-avoiding}. The lace expansion is not expected to work in
the fBm case, since it requires the Markov property. We have the following
result for all dimensions: \medskip

\begin{theorem}[$d\ge 1$]\label{T:main}
  Let $B^H$ be a $d$-dimensional fBm with Hurst parameter $H\in (0,1)$, and let
  $\mathbb{Q}_T$ and $R_T$ be defined as in~\eqref{E:Qt} and~\eqref{E:GT},
  respectively. Then, for any $\beta > 0$, there exists some nonrandom constant
  $T_{\beta} \geq e$ such that the following inequality holds whenever $T \geq
  T_{\beta}$:
  \begin{gather}\label{E:main}
    \begin{aligned}
      \mathbb{Q}_T \left( C_*\: \underline{R}_T  \leq  R_T \leq C^*\: \overline{R}_T  \right)
      \geq  1 - 2 \exp\left(-  C_{\ref{E:logzt}} \gamma_{d,H} (\beta) F_{d,H} (T)\right).
    \end{aligned}
  \end{gather}
  In~\eqref{E:main}, $\gamma_{d,H}(\beta)$ and $F_{d,H}(T)$ are defined in
  Table~\ref{Tb:gamma-F} with
  \begin{align}\label{E:beta}
    \beta^{a,b} \coloneqq \beta^a \mathbf{1}_{\{0<\beta\le 1\}} + \beta^b \mathbf{1}_{\{\beta>1\}}.
  \end{align}
  The constants $C_*$ and $C^*$ are defined as follows:
  \begin{align}\label{E:Cstar}
    C_* \coloneqq \left(\frac{C_{\ref{E:bd_q<}}}{2 C_{\ref{E:logzt}}}\right)^{1/d} \quad \text{and} \quad
    C^* \coloneqq \left(\frac{2C_{\ref{E:logzt}}}{C_{\ref{E:bd_q>}}}\right)^{1/2},
  \end{align}
  with $C_{\ref{E:bd_q<}}$, $C_{\ref{E:bd_q>}}$, and $C_{\ref{E:logzt}}$ being
  the positive constants appearing in~\eqref{E:bd_q<}, \eqref{E:bd_q>},
  and~\eqref{E:logzt}, respectively. The bounds $\underline{R}_T$ and
  $\overline{R}_T$ in~\eqref{E:main} are equal to
  \begin{align}\label{E:R-bd}
    \begin{aligned}
      \underline{R}_T & = \underline{R}_T (d,H,\beta) \coloneqq \left(\frac{\beta  T^2}{\gamma_{d,H}(\beta) F_{d,H}(T)} \right)^{1/d}, \\
      \overline{R}_T  & = \overline{R}_T  (d,H,\beta) \coloneqq \left(\gamma_{d,H}(\beta) T^{2H} F_{d,H} (T) \right)^{1/2};
    \end{aligned}
  \end{align}
  see Table~\ref{Tb:R-bd} for their explicit expressions for various cases.
\end{theorem}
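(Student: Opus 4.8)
The plan is to prove the two-sided bound by controlling each tail separately under the reference measure $\mathbb{P}_T$ and then dividing by the partition function $Z_T$. Writing the complement of the good event in~\eqref{E:main} as the union $\{R_T < C_* \underline{R}_T\}\cup\{R_T > C^*\overline{R}_T\}$, it suffices to bound each of $\mathbb{Q}_T(R_T < C_*\underline{R}_T)$ and $\mathbb{Q}_T(R_T > C^*\overline{R}_T)$ by $\exp(-C_{\ref{E:logzt}}\gamma_{d,H}(\beta)F_{d,H}(T))$ and add them, which produces the factor $2$. By the definition of $\mathbb{Q}_T$ in~\eqref{E:Qt}, each such probability equals $Z_T^{-1}\mathbb{E}^{\mathbb{P}_T}[\mathbf{1}_A\mathcal{E}_T]$, so the three ingredients I need are (i) a lower bound on $Z_T$, (ii) an upper bound on the numerator over the large-radius event, and (iii) an upper bound over the small-radius event.

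For the partition function I will establish $\log Z_T \geq -C_{\ref{E:logzt}}\gamma_{d,H}(\beta)F_{d,H}(T)$, recorded as~\eqref{E:logzt}. The natural route is to insert the indicator of a favorable event on which the path spreads over a region of the optimal radius, so that the self-intersection penalty $\beta\int_{\mathbb{R}^d} L_T(z)^2\,\ud z$ stays of order $\gamma_{d,H}(\beta)F_{d,H}(T)$, and then to bound the $\mathbb{P}_T$-probability of that event from below. Since $B^H$ is not Markov, this cannot be built by concatenating independent increments; instead I will use a Cameron--Martin shift of $B^H$ along a deterministic drift that forces the prescribed spreading, paying the exponential cost of the Cameron--Martin norm, which is again of order $\gamma_{d,H}(\beta)F_{d,H}(T)$ at the optimal radius. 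I expect this to be the main obstacle, since the non-Markovian covariance structure must be handled through the Girsanov theorem, and it is this step that fixes the rate $\gamma_{d,H}(\beta)F_{d,H}(T)$ against which both fluctuation probabilities are calibrated.

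For the large-radius event I bound $\mathcal{E}_T\le 1$, so $\mathbb{E}^{\mathbb{P}_T}[\mathbf{1}_{R_T>C^*\overline{R}_T}\mathcal{E}_T]\le\mathbb{P}_T(R_T>C^*\overline{R}_T)$, and it remains to prove the Gaussian upper-tail estimate $\mathbb{P}_T(R_T>R)\le\exp(-C_{\ref{E:bd_q>}}R^2/T^{2H})$, recorded as~\eqref{E:bd_q>}. This follows from the pointwise bound $R_T\le 2\sup_{0\le t\le T}|B^H_t|$ (using $B^H_0=0$ and $|\overline{B}^H_T|\le\sup_t|B^H_t|$ in~\eqref{E:GT}), self-similarity $\sup_{t\le T}|B^H_t|\stackrel{d}{=}T^H\sup_{s\le 1}|B^H_s|$, and the Borell--TIS inequality for the supremum of the Gaussian process on $[0,1]$. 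Taking $R=C^*\overline{R}_T=C^*(\gamma_{d,H}(\beta)T^{2H}F_{d,H}(T))^{1/2}$ makes the exponent equal to $C_{\ref{E:bd_q>}}(C^*)^2\gamma_{d,H}(\beta)F_{d,H}(T)$, and the choice $C^*=(2C_{\ref{E:logzt}}/C_{\ref{E:bd_q>}})^{1/2}$ from~\eqref{E:Cstar} turns $Z_T^{-1}\mathbb{P}_T(\cdots)$ into $\exp(-C_{\ref{E:logzt}}\gamma_{d,H}(\beta)F_{d,H}(T))$.

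For the small-radius event the key is a deterministic geometric estimate. On $\{R_T\le r\}$, Chebyshev's inequality applied to the definition~\eqref{E:GT} shows the path spends a definite fraction of the time within distance of order $r$ of its center of mass $\overline{B}^H_T$; the correspondingly truncated occupation measure then carries total mass of order $T$ while being supported in a ball of volume of order $r^d$, so Cauchy--Schwarz forces $\int_{\mathbb{R}^d}L_T(z)^2\,\ud z\ge C_{\ref{E:bd_q<}}T^2 r^{-d}$ once $T\ge T_\beta$; this is~\eqref{E:bd_q<}, and it gives $\mathcal{E}_T\le\exp(-\beta C_{\ref{E:bd_q<}}T^2 r^{-d})$ on the event. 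Taking $r=C_*\underline{R}_T$ with $\underline{R}_T=(\beta T^2/(\gamma_{d,H}(\beta)F_{d,H}(T)))^{1/d}$ and combining with the $Z_T$ lower bound yields $\mathbb{Q}_T(R_T<C_*\underline{R}_T)\le\exp((C_{\ref{E:logzt}}-C_{\ref{E:bd_q<}}C_*^{-d})\gamma_{d,H}(\beta)F_{d,H}(T))$, and the choice $C_*=(C_{\ref{E:bd_q<}}/(2C_{\ref{E:logzt}}))^{1/d}$ from~\eqref{E:Cstar} bounds this by $\exp(-C_{\ref{E:logzt}}\gamma_{d,H}(\beta)F_{d,H}(T))$. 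Summing the two tail bounds gives~\eqref{E:main}; the rate $\gamma_{d,H}(\beta)F_{d,H}(T)$ from Table~\ref{Tb:gamma-F} is exactly the one balancing the partition-function cost against the two fluctuation scales, and in the case $d=1$ this balance collapses $\underline{R}_T$ and $\overline{R}_T$ to the common exponent of Theorem~\ref{T:d=1}.
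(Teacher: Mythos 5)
Your proposal is correct in outline and, for two of its three ingredients, coincides with the paper's own proof: the reduction of~\eqref{E:main} to a lower bound on $Z_T$ plus upper bounds on the two restricted expectations, all calibrated to the common rate $c=C_{\ref{E:logzt}}\gamma_{d,H}(\beta)F_{d,H}(T)$, is exactly the scheme in Section~\ref{Sec:prf}, and your tail estimates are precisely Lemma~\ref{lmm:q} (small radius: Chebyshev plus Cauchy--Schwarz on the occupation measure over a ball of radius of order $r$ around $\overline{B}^H_T$; large radius: $\mathcal{E}_T\le 1$, $R_T\le 2\sup_{t\le T}|B^H_t|$, and a Gaussian supremum bound, for which the paper cites Adler--Taylor where you invoke self-similarity and Borell--TIS). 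The one genuinely different device is your treatment of the partition function: you propose to insert the indicator of a favorable spreading event and lower-bound its $\mathbb{P}_T$-probability by a Cameron--Martin cost, whereas the paper tilts by the drift $\lambda t\mathbf{u}$ via the fBm Girsanov theorem (Theorems~\ref{T:mar} and~\ref{T:gir}) and applies Jensen's inequality to $\log Z_T$ directly, so that only the two expectations $I_1=\mathbb{E}^{\mathbb{P}^\lambda_T}\left[\beta\int_{\R^d}L_T(y)^2\,\ud y\right]$ and $I_2=\frac{1}{2}\lambda^2 C_H T^{2(1-H)}$ need to be computed. Your variant can be made rigorous---Markov's inequality under the tilted law controls the penalty on an event of probability at least $1/2$, and since $M$ is Gaussian with deterministic bracket the Radon--Nikodym density is bounded below on a constant-probability event at the same exponential cost $\frac{1}{2}\lambda^2 C_H T^{2(1-H)}$---but it requires these two extra concentration steps that Jensen sidesteps; what Jensen buys is that no event need be constructed at all. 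Two caveats. First, your sketch asserts rather than derives that the Girsanov cost matches $\gamma_{d,H}(\beta)F_{d,H}(T)$ ``at the optimal radius'': the specific forms in Table~\ref{Tb:gamma-F} come from estimating $I_1$ by Gaussian density bounds, splitting into the cases $dH<1$, $dH=1$ (where the logarithm appears), and $dH>1$, and then optimizing $\lambda$ against $I_2$; this case analysis is the technical heart of Lemma~\ref{lmm:zt} and is the step your proposal leaves entirely open. Second, a small bookkeeping point: the restriction $T\ge T_\beta$ is not needed for the Cauchy--Schwarz estimate, which holds for all $T\ge 0$ once $r\ge 1$; it is needed to guarantee $C_*\underline{R}_T\ge 1$, so that~\eqref{E:bd_q<} is applicable at $r=C_*\underline{R}_T$.
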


\begin{table}[htbp]
  \renewcommand{\arraystretch}{1.8}
    \newcommand{\shade}{\cellcolor{gray!40}}
  \begin{tabular}{|r|cc|cc|cc|}
    \hline
    \multirow{2}{*}{}                 & \multicolumn{2}{c|}{$dH < 1$} & \multicolumn{2}{c|}{$dH = 1$}                 & \multicolumn{2}{c|}{$dH > 1$} \\
                                      & \shade $d=1$                  & $d \ge 1$                                     & $H = 1/2$ and $d=2$            & $H < 1/2$    & $H \geq 1/2$           & $H < 1/2$ \\ \hline
    $\gamma_{d,H}(\beta) \coloneqq$ & \shade $\beta^{2/3}$                  & $\beta^{\frac{2 (1 - H)}{3 -  (d + 2) H}}$    & $\beta^{1/2,\, 2/3}$           & $\beta$      & $\beta^{2/3}$          & $\beta$   \\
      $F_{d,H}(T) \coloneqq$          & \shade $T^{\frac{2(2-H)}{3}}$ & $T^{1 + \frac{(1-2 H)(1 - dH)}{3 - (d+2) H}}$ & $T$                            & $T \log (T)$ & $T^{\frac{2(2-H)}{3}}$ & $T$       \\ \hline
  \end{tabular}
  \caption{Definitions of $\gamma_{d,H}(\beta)$ and $F_{d,H}(T)$, with the case
  $d=1$ being specially highlighted in the gray background. Recall that the
notation $\beta^{1/2,\, 2/3}$ is given in~\eqref{E:beta}.}
  \label{Tb:gamma-F}
\end{table}

In the one-dimensional case, $dH = H <1$, and thus
\begin{align*}
  \underline{R}_T = \overline{R}_T
  = \beta^{1/3} T^{\frac{2(1+H)}{3}};
\end{align*}
see the column $d=1$ in Table~\ref{Tb:dH} below for some concrete values.
Therefore, Theorem~\ref{T:d=1} is a direct corollary of Theorem~\ref{T:main}.
\bigskip

\begin{table}[h!tbp]
  \centering
  \renewcommand{\arraystretch}{1.5}

  \begin{subtable}{\textwidth}
    \centering
    \newcommand{\shade}{\cellcolor{gray!40}}
    \begin{tabular}{|c|cc|cc|}
      \hline
      \multirow{2}{*}{} & \multicolumn{2}{c|}{$dH < 1$}                              & \multicolumn{2}{c|}{$dH > 1$}                                                                \\
                        & \shade $d=1$                                               & $d \ge 1$                                                                                     & $H \geq 1/2$                                    & $H < 1/2$                    \\ \hline
      $\underline{R}_T$ & \shade \multirow{2}{*}{$\beta^{1/3} T^{\frac{2(1+H)}{3}}$} & $\left(\beta^{\frac{1 - dH}{3 - (d + 2)H}} T^{\frac{2 - 2dH^2}{3 - (d + 2)H}}\right)^{1/d} $  & $\left(\beta^{1/3}T^{2(1 + H)/3}\right)^{1/d} $ & $T^{1/d}$                    \\
      $\overline{R}_T$  & \shade                                                     & $\beta^{\frac{1 - H}{3 - (2+d)H}} T^{\frac{2 - (d-1)H - 2H^2}{3 - (d + 2)H}}$                 & $\beta^{1/3} T^{2(1+H)/3}$                      & $\beta^{1/2} T^{(1 + 2H)/2}$ \\ \hline
    \end{tabular}
    \caption{$d H \ne 1$}
    \label{Tb:R-bd-A}
  \end{subtable}

  \vspace{1em} 

  \begin{subtable}{\textwidth}
    \begin{tabular}{|c|cc|}
      \hline
      \multirow{2}{*}{} & \multicolumn{2}{c|}{$dH = 1$} \\
                        & $H = 1/2$ and $d=2$            & $H = 1/d$ and $d\ge 3$                      \\ \hline
      $\underline{R}_T$ & $\beta^{1/4, 1/6}\: T^{1/2}$   & $\left( T/\log(T)\right)^{1/d}$             \\
      $\overline{R}_T$  & $\beta^{1/4, 1/3}\: T$         & $\beta^{1/2} T^{(1 + 2H)/2} \sqrt{\log(T)}$ \\ \hline
    \end{tabular}
    \centering
    \caption{$d H =1$}
    \label{Tb:R-bd-B}
  \end{subtable}

  \caption{Explicit expressions of \( \underline{R}_T \) and \( \overline{R}_T
  \), as defined in~\eqref{E:R-bd}, for various values of $(d,H)$, with the case
$d=1$ being specially highlighted in the gray background. Recall that the
notation $\beta^{a,\, b}$ is given in~\eqref{E:beta}.}
  \label{Tb:R-bd}
\end{table}

The proof of Theorem~\ref{T:main}  builds on techniques from Mueller and
Neuman~\cite{mueller.neuman:23:radius}. Given $r>0$, define the following events
and probabilities:
\begin{alignat*}{4}
  A^{(<)}_{r,T} & \coloneqq \left\{R_T\le r\right\}, \qquad & q^{(<)}_{r,T} & \coloneqq \mathbb{E}^{\mathbb{P}_T}\left[\mathbf{1}_{A_{r,T}^{(<)}}\mathcal{E}_T\right], \\
  A^{(>)}_{r,T} & \coloneqq \left\{R_T\ge r\right\}, \qquad & q^{(>)}_{r,T} & \coloneqq \mathbb{E}^{\mathbb{P}_T}\left[\mathbf{1}_{A_{r,T}^{(>)}}\mathcal{E}_T\right].
\end{alignat*}

The following two lemmas, whose proofs are deferred to
Sections~\ref{sec:prf_lmma_1} and~\ref{sec:prf_lmma_2}, respectively, will be
used in the proof of Theorem~\ref{T:main}. \medskip

\begin{lemma}\label{lmm:q}
  For any $T\ge 0$, the following two inequalities hold:
    \begin{alignat}{4}\label{E:bd_q<}
    q^{(<)}_{r,T} & \leq \exp\left(-C_{\ref{E:bd_q<}} \frac{\beta\: T^2}{r^d}\right), & \quad & \text{for all $r\ge 1$,} \\
    q^{(>)}_{r,T} & \le \exp\left(-C_{\ref{E:bd_q>}} \frac{r^2}{\: T^{2H}}\right),                 & \quad & \text{for all $r>0$}.\label{E:bd_q>}
  \end{alignat}
  where
  \begin{align*}
  C_{\ref{E:bd_q<}} = C_{\ref{E:bd_q<}}(d) \coloneqq  \frac{9 \Gamma (1 + d/2)}{2^{5+d}\: \pi^{d/2}} \quad \text{and} \quad C_{\ref{E:bd_q>}} = C_{\ref{E:bd_q>}} (d, H) > 0.
  \end{align*}
\end{lemma}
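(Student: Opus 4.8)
The plan is to prove the two inequalities separately, since they rest on opposite mechanisms. The bound on $q^{(<)}_{r,T}$ exploits the fact that a \emph{small} radius of gyration forces the occupation measure to pile up, which makes the self-repulsion factor $\mathcal{E}_T$ exponentially small; the bound on $q^{(>)}_{r,T}$ instead simply discards $\mathcal{E}_T\le 1$ and estimates the Gaussian upper tail of $R_T$ under $\mathbb{P}_T$.

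For~\eqref{E:bd_q<} I would work entirely on the event $A^{(<)}_{r,T}=\{R_T\le r\}$, on which $\mathcal{E}_T$ can be bounded deterministically. First I would note that $R_T\le r$ means $\int_0^T|B_t^H-\overline{B}_T^H|^2\ud t\le Tr^2$, so by Chebyshev's inequality the set of times $t$ with $|B_t^H-\overline{B}_T^H|\ge 2r$ has Lebesgue measure at most $T/4$; hence the path spends time at least $3T/4$ inside the central ball $\mathbf{B}_{2r}(\overline{B}_T^H)$. Recalling that $L_T(z)=\big|\{t: B_t^H\in\mathbf{B}_1(z)\}\big|$ is the occupation mass of $\mathbf{B}_1(z)$, I would then bound $\int_{\mathbb{R}^d}L_T(z)^2\ud z$ from below by a covering/pigeonhole estimate: partitioning $\mathbf{B}_{2r}(\overline{B}_T^H)$ into cells of diameter $<1$, the occupation times $\tau_i$ of the cells satisfy $L_T(z)\ge\tau_i$ for every $z$ in cell $i$ (since such a cell lies inside $\mathbf{B}_1(z)$), and $\sum_i\tau_i\ge 3T/4$; Cauchy--Schwarz over the $\asymp r^d$ cells then yields $\int L_T^2\gtrsim T^2/r^d$. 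Using $r\ge 1$ to control the number of cells and tracking the volume of the unit ball $|\mathbf{B}_1(0)|=\pi^{d/2}/\Gamma(1+d/2)$ produces the explicit constant $C_{\ref{E:bd_q<}}$, with the factor $9$ originating from squaring the lower bound $3T/4$. Since on $A^{(<)}_{r,T}$ this forces $\mathcal{E}_T\le\exp(-C_{\ref{E:bd_q<}}\beta T^2/r^d)$ pointwise, the quantity $q^{(<)}_{r,T}=\mathbb{E}^{\mathbb{P}_T}[\mathbf{1}_{A^{(<)}_{r,T}}\mathcal{E}_T]$ is dominated by the same expression.

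For~\eqref{E:bd_q>} I would first use $\mathcal{E}_T\le 1$ to write $q^{(>)}_{r,T}\le\mathbb{P}_T(R_T\ge r)$, and then reduce the radius of gyration to the supremum $M_T:=\sup_{0\le t\le T}|B_t^H|$ via the elementary bound $R_T\le 2M_T$ (because $|\overline{B}_T^H|\le M_T$ gives $|B_t^H-\overline{B}_T^H|\le 2M_T$), so that $\mathbb{P}_T(R_T\ge r)\le\mathbb{P}_T(M_T\ge r/2)$. The self-similarity $\{B_{Ts}^H\}_s\overset{d}{=}\{T^H B_s^H\}_s$ gives $M_T\overset{d}{=}T^H M_1$ with $\mathbb{E}M_1<\infty$ by Fernique's theorem, while the Borell--TIS concentration inequality applied to the $1$-Lipschitz functional $M_T$ of the Gaussian path yields $\mathbb{P}_T(M_T\ge\mathbb{E}M_T+u)\le\exp(-u^2/(2T^{2H}))$, the weak variance being $\sup_{0\le t\le T}t^{2H}=T^{2H}$. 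Combining these for $r\gtrsim T^H$ gives the desired $\exp(-C_{\ref{E:bd_q>}}r^2/T^{2H})$, the dependence of $C_{\ref{E:bd_q>}}$ on $(d,H)$ entering through $\mathbb{E}M_1$; for the remaining small values of $r$, where the right-hand side is close to $1$, I would instead use $q^{(>)}_{r,T}\le Z_T$ together with the smallness of $Z_T$ to close the estimate.

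The main obstacle is the lower bound on $\int_{\mathbb{R}^d}L_T(z)^2\ud z$ in the first inequality: the hypothesis $R_T\le r$ controls only the $L^2$-average displacement, not the maximal excursion, so the Chebyshev truncation to a central ball and the subsequent covering argument are precisely what convert this averaged control into a genuine lower bound on the self-intersection functional with the correct power $r^{-d}$ and an explicit constant. By contrast, the second inequality is essentially a standard Gaussian concentration estimate, the only delicate point being the regime of small $r$, where the right-hand side approaches $1$ and the bound must be absorbed using $\mathcal{E}_T$ (equivalently $Z_T$) rather than the Gaussian tail alone.
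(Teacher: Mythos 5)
Your proof of \eqref{E:bd_q<} follows the same skeleton as the paper's---Chebyshev's inequality to show the path spends time at least $3T/4$ in $\mathbf{B}_{2r}\left(\overline{B}^H_T\right)$, then a Cauchy--Schwarz step to force $\int_{\R^d} L_T(z)^2\,\ud z \gtrsim T^2/r^d$---but you implement the second step by discretization, where the paper does not. The paper integrates $L_T$ over the single enlarged ball $\mathbf{B}_{2r+1}\left(\overline{B}^H_T\right)$, observing that for each good time $t$ the entire unit ball $\mathbf{B}_1\left(B^H_t\right)$ sits inside it, which gives $\int_{\mathbf{B}_{2r+1}(\overline{B}^H_T)} L_T(y)\,\ud y \ge \tfrac{3T}{4}$ as in \eqref{eq:spend-time}, and then applies Cauchy--Schwarz once on that ball; the volume $K_d(2r+1)^d$ with $K_d = \pi^{d/2}/\Gamma(1+d/2)$ is what produces the stated constant $C_{\ref{E:bd_q<}} = 9/(2^{5+d}K_d)$. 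Your covering argument ($L_T(z)\ge \tau_i$ for $z$ in a cell of diameter $<1$, then pigeonhole/Cauchy--Schwarz over the $\tau_i$) is sound and yields the correct rate $T^2/r^d$, but it cannot reproduce the stated constant: cells of diameter $<1$ have volume at most of order $d^{-d/2}$ and their number is of order $(c\sqrt{d}\,r)^d$, so the constant you get is strictly worse than $C_{\ref{E:bd_q<}}$. Your claim that tracking $|\mathbf{B}_1(0)|$ ``produces the explicit constant'' is therefore inaccurate; this is harmless for Theorem~\ref{T:main} (any explicit positive constant works, merely shifting $C_*$), but as a proof of the lemma verbatim it establishes the inequality only with a smaller constant.

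For \eqref{E:bd_q>} your route is essentially the paper's: discard $\mathcal{E}_T\le 1$, reduce $R_T$ to the running supremum (your bound $R_T\le 2M_T$ is the paper's triangle-inequality argument in cleaner form), and apply Gaussian concentration with weak variance $T^{2H}$, the paper citing \cite[Theorem~4.1.1]{adler.taylor:07:random}. You are right that Borell--TIS only controls deviations above the mean, i.e.\ the regime $r\gtrsim T^H$, a point the paper glosses over; however, your patch for small $r$---bounding $q^{(>)}_{r,T}\le Z_T$ and invoking ``the smallness of $Z_T$''---rests on an ingredient the paper does not provide: Lemma~\ref{lmm:zt} gives only \emph{lower} bounds on $Z_T$, and the exponential upper bound $Z_T\le e^{-CT}$ is derived in the paper only for $H=1/2$, via an independent-increments block argument that does not transfer to fBm. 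To close that regime you would need a separate estimate, e.g.\ $\int_{\R^d} L_T(z)^2\,\ud z \ge K_d T^2/(1+M_T)^d$ (from $\int_{\R^d} L_T = K_d T$, Cauchy--Schwarz, and the support of $L_T$ lying in $\mathbf{B}_{M_T+1}(0)$) combined with a tail bound on $M_T$---or simply observe that the proof of Theorem~\ref{T:main} only ever uses $r = r^* \gtrsim T^H\sqrt{F_{d,H}(T)} \ge T^H$, where the concentration regime applies. As written, then, your argument for \eqref{E:bd_q>} is complete exactly where the paper's is, and incomplete (pending an upper bound on $Z_T$) in the small-$r$ regime that the paper silently ignores.
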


\begin{lemma}\label{lmm:zt}
  There exist constants $C_{\ref{E:logzt}} = C_{\ref{E:logzt}}(d,H) > 0$ and
  $T_{\beta} > e$, such that for all $T \ge T_{\beta}$, $d\ge 1$, and $H\in
  (0,1)$, we have
  \begin{gather}\label{E:logzt}
    \log Z_T \geq - C_{\ref{E:logzt}}\: \gamma_{d,H}(\beta) F_{d,H} (T),
  \end{gather}
  where $\gamma_{d,H}(\beta)$ and $F_{d,H}(T)$ are defined in
  Table~\ref{Tb:gamma-F}.
\end{lemma}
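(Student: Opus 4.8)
The plan is to produce a lower bound on $Z_T=\mathbb{E}^{\mathbb{P}_T}[\mathcal{E}_T]$ by exhibiting a single good ``spreading'' strategy for the path and paying for it through a change of measure. Concretely, for any deterministic drift $h\colon[0,T]\to\mathbb{R}^d$ lying in the Cameron--Martin space $\mathcal{H}$ of the fBm, write $\mathbb{P}_T^h$ for the law of $B^H+h$; the Gibbs variational principle (equivalently, Jensen's inequality applied on $\mathbb{P}_T^h$, together with the Girsanov/Cameron--Martin identity $\mathrm{Ent}(\mathbb{P}_T^h\,|\,\mathbb{P}_T)=\tfrac12\Norm{h}_{\mathcal H}^2$) gives
\begin{equation*}
  \log Z_T \;\ge\; -\beta\,\mathbb{E}^{\mathbb{P}_T}\!\left[\int_{\mathbb{R}^d} L_T(z;B^H+h)^2\,\ud z\right] - \tfrac12\Norm{h}_{\mathcal H}^2 .
\end{equation*}
It therefore suffices to exhibit \emph{one} drift $h$ making the right-hand side at least $-C\,\gamma_{d,H}(\beta)F_{d,H}(T)$; the two terms are the self-repulsion energy of the shifted path and its entropic cost, and the whole proof is an optimization balancing them.

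For the trial drift I would take a ballistic profile $h(t)=(R/T)\,t\,e_1$ in a fixed direction $e_1$, with displacement scale $R>0$ to be optimized at the end. By the self-similarity $B^H_{Tt}\stackrel{d}{=}T^H B^H_t$, one verifies $h\in\mathcal H$ and the norm scales as $\Norm{h}_{\mathcal H}^2=c_H\,R^2T^{-2H}$, so the entropic cost is $\asymp R^2T^{-2H}$. For the energy I would expand
\begin{equation*}
  \mathbb{E}^{\mathbb{P}_T}\!\left[\int_{\mathbb{R}^d} L_T(z;B^H+h)^2\,\ud z\right]
  = \int_0^T\!\!\int_0^T \mathbb{E}\big[V\big(|(B^H_s-B^H_t)+(h(s)-h(t))|\big)\big]\,\ud s\,\ud t,
\end{equation*}
where $V(\rho)=|\mathbf{B}_1(0)\cap\mathbf{B}_1(\rho e_1)|$ is the lens volume, supported on $\rho<2$. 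Since $B^H_s-B^H_t\sim N(0,|s-t|^{2H}I_d)$ and $h(s)-h(t)=(R/T)(s-t)e_1$, the integrand equals an explicit Gaussian quantity $g(|s-t|)$ (which uses only the marginal law of the increment, so the long-range dependence of the fBm is irrelevant here), and the energy reduces to $\asymp T\int_0^T g(u)\,\ud u$.

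The heart of the matter is estimating $g(u)=\mathbb{P}\big(|N((R/T)u\,e_1,u^{2H}I_d)|<2\big)$ and integrating it. Two mechanisms compete at time-lag $u$: the Gaussian spread $u^H$ and the drift separation $(R/T)u$. On lags where the fluctuation dominates, $g(u)\asymp\min(1,u^{-dH})$, exactly the untilted self-intersection density; on longer lags the drift pushes the two ball centers more than $2$ apart and $g$ is negligible, the crossover occurring at $u_0\asymp(T/R)^{1/(1-H)}$ when $R\le T$, or at $u_0\asymp T/R$ when $R>T$ (where the fast drift makes the path effectively one-dimensional and forces $g(u)\asymp\mathbf 1_{\{u<2T/R\}}$). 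Carrying out $\int_0^{u_0}\min(1,u^{-dH})\,\ud u$ then splits into the three regimes producing the table: a power $u_0^{1-dH}$ when $dH<1$, a logarithm $\log u_0$ when $dH=1$, and a bounded integral (energy floor $\asymp T$, or $\asymp T^2/R$ in the large-drift regime) when $dH>1$. Inserting these and minimizing $\beta\,\mathcal E(R)+c_H R^2T^{-2H}$ over $R$ yields the optimal scale $R_*$ and the value $\gamma_{d,H}(\beta)F_{d,H}(T)$; the distinctions $dH\lessgtr1$, the transition $R_*\lessgtr T$ (i.e.\ $H\lessgtr1/2$ when $dH>1$), and the split $\beta\le1$ versus $\beta>1$ are exactly the columns of Table~\ref{Tb:gamma-F}.

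I expect the main obstacle to be the energy estimate rather than the entropy term. Establishing $g(u)\asymp\min(1,u^{-dH})$ uniformly and, above all, handling the large-drift regime $R>T$ (where the trajectory is effectively one-dimensional and the self-intersection becomes $\asymp T^2/R$, independent of $d$) require care, as does the borderline logarithmic divergence at $dH=1$, which is the source of the $\log T$ corrections in Table~\ref{Tb:R-bd-B}. A secondary technical point is to make the Cameron--Martin computation rigorous for all $H\in(0,1)$, including $H<1/2$ where $\mathcal H$ consists only of suitably (fractionally) regular functions; one checks $\Norm{h}_{\mathcal H}<\infty$ for the smooth profile $h$ and fixes $c_H$ and the threshold $T_\beta$ by requiring that the subleading terms be absorbed into the constant $C_{\ref{E:logzt}}$.
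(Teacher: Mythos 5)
Your proposal is correct and follows essentially the same route as the paper: the paper also tilts by a ballistic drift $\lambda t\,\mathbf{u}$ (your $h(t)=(R/T)t\,e_1$ with $\lambda=R/T$), applies Jensen's inequality to get $\log Z_T \ge -(I_1+I_2)$ with entropic cost $I_2=\tfrac12 C_H\lambda^2 T^{2(1-H)}$ computed via the Norros--Valkeila--Virtamo martingale and Girsanov theorem (your Cameron--Martin identity), and bounds the energy $I_1$ exactly as you describe, reducing it via the increment marginals to $\beta T\int_0^T g(u)\,\ud u$ with the same crossover analysis and the same three regimes $dH<1$, $dH=1$, $dH>1$, including your large-drift bound $T(1\wedge\lambda^{-1})$. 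Your optimization over $R$ reproduces the paper's choices of $\lambda$ and the values $\gamma_{d,H}(\beta)F_{d,H}(T)$ in all cases, including the $\beta\le 1$ versus $\beta>1$ split at $dH=1$, $d=2$.
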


\bigskip The paper is organized as follows. The proofs of Theorem~\ref{T:main}
and lemmas~\ref{lmm:q} and \ref{lmm:zt} are given in Section~\ref{Sec:prf}. We
then extend our discussions on our results in Section~\ref{Sec:illus}. Finally,
in the appendix, we include some known results for fBm's and the corresponding
Girsanov theorem.

\section{Proof of the main result}\label{Sec:prf}

We will defer the proofs of the lemmas to the next section, opting to initially
establish Theorem~\ref{T:main} through their application. Here, let us briefly
outline our strategy, which has been successfully employed in Brownian cases
in~\cite[Theorem~1.1]{mueller.neuman:23:radius}.

Let $a < b$ be real numbers, and let $c > 0$.  Recall that $\mathbb{Q}_T$ is
defined by~\eqref{E:Qt}. Let $X$ be a random variable. Then, to prove
\begin{align}\label{E_:stg-0}
  \mathbb{Q}_T \left(a \leq X \leq b\right) \geq 1 - 2 \exp (- c),
\end{align}
it suffices to show that
\begin{align*}
  \mathbb{Q}_T (X \leq a) & = \frac{\mathbb{E}^{\mathbb{P}_T} \left[\mathbf{1}_{\{X \leq a\}} \mathcal{E}_T\right]}{Z_T}  \leq \exp (-c), \shortintertext{and}
  \mathbb{Q}_T (X \geq b) & = \frac{\mathbb{E}^{\mathbb{P}_T} \left[\mathbf{1}_{\{X \geq b\}} \mathcal{E}_T\right]}{Z_T}  \leq \exp (-c).
\end{align*}
Additionally, the above two inequalities are ensured by the next three
inequalities:
\begin{gather}
  Z_T \geq \exp (- c),\label{E_:stg-1} \\[5pt]
  \mathbb{E}^{\mathbb{P}_T} \left[\mathbf{1}_{\{X \leq a\}} \mathcal{E}_T\right] \leq \exp (- 2c), \shortintertext{and}
  \mathbb{E}^{\mathbb{P}_T} \left[\mathbf{1}_{\{X \geq b\}} \mathcal{E}_T\right] \leq \exp (- 2 c). \label{E_:stg-3}
\end{gather}
Following this idea, we are ready to present the proof of Theorem~\ref{T:main}.

\begin{proof}[Proof of Theorem~\ref{T:main}]
  By the definition of $F_{d,H}$ in Table~\hyperref[Tb:gamma-F]{1}, it is clear
  that  the function $T^2/F_{d,H}(T)$ is monotone increasing on $[e,\infty)$
  with $\lim_{T\to\infty}T^2/F_{d,H}(T) = \infty$ for any $d \geq 1$ and $H \in
  (0,1)$. Hence, when $T$ is large enough, we can ensure that
  \begin{align*}
     r_{*} \coloneqq \left( \frac{C_{\ref{E:bd_q<}} \beta T^2}{ 2
     C_{\ref{E:logzt}} \gamma_{d,H}(\beta) F_{d,H} (T) } \right)^{1/d} \geq 1.
  \end{align*}
  Plugging the above $r_*$ to~\eqref{E:bd_q<} shows that
  \begin{align*}
    \mathbb{E}^{\mathbb{P}_T}\left[\mathbf{1}_{R_T \leq r_*}\right] = q^{(<)}_{r_*,T} \leq \exp\left(-C_{\ref{E:bd_q<}}\frac{ \beta T^2}{r_*^d}\right) = \exp\big(- 2C_{\ref{E:logzt}} \gamma_{d,H}(\beta) F_{d,H} (T)\big).
  \end{align*}
  Next, by choosing the following $r^*$ in~\eqref{E:bd_q>}
  \begin{align*}
    r^{*} \coloneqq
    \left(\frac{2C_{\ref{E:logzt}}\gamma_{d,H}(\beta) T^{2H} F_{d,H}(T)}{C_{\ref{E:bd_q>}}}\right)^{1/2},
  \end{align*}
  we have
  \begin{align*}
    \mathbb{E}^{\mathbb{P}_T}\left[\mathbf{1}_{R_T \geq r^*}\right]
    = q^{(>)}_{r^*,T} \leq \exp\left(- C_{\ref{E:bd_q>}}\, \frac{(r^*)^2}{T^{2H}}\right)
    = \exp\big(- 2C_{\ref{E:logzt}} \gamma_{d,H}(\beta) F_{d,H} (T)\big).
  \end{align*}
  Concerning~\eqref{E:logzt} in Lemma~\ref{lmm:zt}. We have justified all
  inequalities~\eqref{E_:stg-1}--\eqref{E_:stg-3}, and
  therefore~\eqref{E_:stg-0}, with $X = R_T$ defined as in~\eqref{E:GT}, $a =
  r_*$, $b = r_*$, and $c = C_{\ref{E:logzt}} \gamma_{d,H}(\beta) F_{d,H} (T)$.
  This proves~\eqref{E:main} with~$\underline{R}_T$ and $\overline{R}_T$ defined
  as in~\eqref{E:R-bd}. This completes the proof of Theorem~\ref{T:main}.
\end{proof}

\subsection{Upper bounds on \texorpdfstring{$q^{(<)}_{r,T},q^{(>)}_{r,T}$}{}---Proof of Lemma~\ref{lmm:q}}\label{sec:prf_lmma_1}

\textbf{(1)}~Fix an arbitrary $r\ge 1$. Suppose $R_T\ge r$. Then there must be a
time $t_1\in[0,T]$ such that $\left|B^H_{t_1}-\overline{B}^H_T\right|\ge r$.
This in turn implies that there exists $t_2\in[0,T]$ such that
$\left|B^H_{t_1}-B^H_{t_2}\right|\ge r$. Then the triangle inequality shows that
either $\left|B^H_{t_1}\right|\ge r/2$ or $\left|B^H_{t_2}\right|\ge r/2$. So we
conclude that $\sup_{t \in [0,T]} |B_t|\ge r/2$. Hence, concerning
$\mathcal{E}_T\le1$, and using~\cite[Theorem~4.1.1]{adler.taylor:07:random}, we
have
\begin{align*}
  q^{(>)}_{r,T}
  =   & \mathbb{E}^{\mathbb{P}}\left[\mathbf{1}_{A^{(>)}_{r,T}} \mathcal{E}_T\right]
  \le   \mathbb{P}\left(A^{(>)}_{r,T}\right)
  \le   \mathbb{P}\left(\sup_{t\in[0,T]}\left|B^H_t\right|\ge r/2\right)
  \le \exp\left(- C_{\ref{E:bd_q>}} \frac{r^2}{T^{2H}}\right),
\end{align*}
for some constant $C_{\ref{E:bd_q>}}>0$.
This proves~\eqref{E:bd_q>}.

\bigskip

\noindent \textbf{(2)~} Fix an arbitrary $r>0$. Recall that $L_T(x)$ and
$\mathcal{E}_T$ are defined in~\eqref{E:LTy} and~\eqref{E:ET}, respectively. We
claim that
\begin{align}\label{E:Claim}
  R_T \le r \quad \text{implies} \quad
  -\log\mathcal{E}_T = \int_{\mathbb{R}^d}L_T(x)^2 \ud x \ge 2 C_{\ref{E:bd_q<}} \frac{T^2}{(r+1)^d}.
\end{align}
As a consequence, for all $r \geq 1$,
\begin{align*}
  q^{(<)}_{r,T}
  =  \mathbb{E}^{\mathbb{P}}\left[\mathbf{1}_{A^{(<)}_{r,T}} \mathcal{E}_T\right]
  \le & \sup_{\omega\in A^{(<)}_{r,T}}\mathcal{E}_T(\omega)
  = \exp\left(-\beta \inf_{\omega\in A^{(<)}_{r,T}} \int_{\mathbb{R}^d}L_T(z,\omega)^2 \ud z\right)\\
  \leq & \exp \left(-  2 C_{\ref{E:bd_q<}} \frac{\beta T^2}{(r+1)^d} \right) \leq \exp \left(- C_{\ref{E:bd_q<}} \frac{\beta T^2}{r^d} \right).
\end{align*}
This confirms~\eqref{E:bd_q<}. Thus, it remains to prove~\eqref{E:Claim}. Assume
that $R_T \le r$. Notice that
\begin{equation*}
  \begin{split}
    r^2T & \ge \int_{0}^{T}\left|B^H_t-\overline{B}^H_T\right|^2\ud t
           \ge \int_{0}^{T}4r^2\mathbf{1}_{\{|B^H_t-\overline{B}^H_T| > 2r\}}\ud t \\
         & \ge 4r^2\left|\left\{t\in[0,T]: \left|B^H_t-\overline{B}^H_T\right| > 2r\right\}\right|,
  \end{split}
\end{equation*}
where $|\cdot|$ denotes Lebesgue measure. Therefore,
\begin{align*}
  & \left|\left\{t\in[0,T]:\left|B^H_t-\overline{B}^H_T\right|>2r\right\}\right| \le \frac{T}{4}  \shortintertext{and}
  & \left|\left\{t\in[0,T]:\left|B^H_t-\overline{B}^H_T\right|\le 2r\right\}\right| \ge \frac{3T}{4}.
\end{align*}
It follows that
\begin{equation}\label{eq:spend-time}
  \int_{\mathbf{B}_{(2r+1)}(\overline{B}^H_T)}L_T(y)\ud y \ge \frac{3T}{4}.
\end{equation}
Denote by $K_d \coloneqq \pi^{d/2}/\Gamma (1 + d/2)$ the volume of the unit ball in
$\mathbb{R}^d$. Then
\begin{equation*}
  C_{\ref{E:bd_q<}} = \frac{9}{2^{5 + d} K_d}, \quad \text{and} \quad K_d (2r+1)^d = \left|\mathbf{B}_{2r+1}\left(\overline{B}^H_T\right)\right|
                    = \int_{\mathbf{B}_{2r+1}\left(\overline{B}^H_T\right)}\ud y.
\end{equation*}
Now, by the Cauchy-Schwarz inequality and~\eqref{eq:spend-time},
\begin{equation*}
  \begin{split}
    K_d (2r+1)^d\int_{\mathbb{R}^d}L_T(y)^2\ud y
    & \ge \int_{\mathbf{B}_{2r+1}(\overline{B}^H_T)}\ud y \int_{\mathbf{B}_{2r+1}(\overline{B}^H_T)}L_T(y)^2\ud y \\
    & \ge \left(\int_{\mathbf{B}_{2r+1}(\overline{B}^H_T)}L_T(y)\ud y\right)^2 \ge \frac{9T^2}{16}.
  \end{split}
\end{equation*}
It follows that
\begin{equation*}
  \int_{\mathbb{R}^d}L_T(y)^2\ud y
  \ge \frac{9T^2}{16 K_d (2r+1)^d}
  \ge \frac{9}{2^{4+d}} \times \frac{T^2}{K_d (r+1)^d} = 2 C_{\ref{E:bd_q<}} \frac{T^2}{(r + 1)^d}.
\end{equation*}
This proves claim~\eqref{E:Claim}. The proof of Lemma~\ref{lmm:q} is complete. \qed

\subsection{Lower bounds on \texorpdfstring{$Z_T$}{}---Proof of Lemma~\ref{lmm:zt}}\label{sec:prf_lmma_2}

Now we study the term $Z_T$ defined as in~\eqref{E:Qt}. Let $\mathbf{u}$ be a
unit vector in $\mathbb{\R}^d$, and let $\mathbb{P}_{T}^{\lambda},\mathcal{Q}_T$
be given as in \eqref{E:p-lambda}. That is, the new measure
$\mathbb{P}_{T}^{\lambda}$ adds a drift proportional to $\lambda$, and
$\mathcal{Q}_T$ is the Radon-Nikodym change of measure term, see Theorem
\ref{T:gir}.  The drift enforces the behavior we think the self-repellent
process should have. Then, we can write
\begin{equation*}
  Z_T = \mathbb{E}^{\mathbb{P}_T}[\mathcal{E}_T]
      = \mathbb{E}^{\mathbb{P}^\lambda_T}\left[\mathcal{E}_T
        \cdot\left(\mathcal{Q}_T(\lambda M)\right)^{-1}\right].
\end{equation*}
Applying Jensen's inequality to $\log Z_T$, we find
\begin{equation*}
  \log Z_T
  \ge \mathbb{E}^{\mathbb{P}_T^\lambda}
    \left[\log\left(\mathcal{E}_T\cdot
    \left(\mathcal{Q}_T(\lambda M)\right)^{-1}\right)\right]
  = - \left(I_1 + I_2\right),
\end{equation*}
where
\begin{equation*}
  I_1 \coloneqq \mathbb{E}^{\mathbb{P}_T^\lambda}\left[\beta \int_{\mathbb{R}^d}L_T(y)^2 \ud y\right] \quad \text{and} \quad
  I_2 \coloneqq \mathbb{E}^{\mathbb{P}_T^\lambda}\left[\log\left(\mathcal{Q}_T(\lambda M)\right)\right].
\end{equation*}
As a consequence, we need to establish upper bounds for both $I_1$ and $I_2$. In
the proof below, the generic constant $C>0$ may vary from line to line.

\bigskip\noindent\textbf{Upper bound for $I_1$.} Due to the Girsanov formula for
fBm's as stated in Theorem~\ref{T:gir}, under $\mathbb{P}^{\lambda}_T$,
$\{B_t^H\colon 0\leq t \leq T\}$ has the same distribution as $\{B_t^H + \lambda
t \mathbf{u} \colon 0\leq t \leq T\}$ under $\mathbb{P}_T$. Fixing $T > e$. Let
$g(t,\cdot)$ be the probability density of $B^H_t$ for all $0 \leq t \leq T$.
Since $B^H$ has stationary increments, if $0\le s<t$ then the probability
density of $B^H_t-B^H_s$ is $g(t-s,\cdot)$. Note that for
$x,y,z\in\mathbb{R}^d$, if $|x-z|<1$ and $|y-z|<1$, then $|x-y|<2$. So we have
\begin{align*}
  I_1 &\le C\beta \int_{0}^{T} \ud t \int_{0}^{T} \ud s\: \mathbb{E}^{\mathbb{P}_T}
        \left[\mathbf{1}_{\mathbf{B}_2(0)}(B^H_t-B^H_s+(t-s)\lambda \mathbf{u}) \right]  \\
      & \le C\beta  \int_{0}^{T}\ud t \int_{0}^{t} \ud s \int_{\mathbf{B}_2(0)} \ud z\: g(t-s,z-(t-s)\lambda \mathbf{u}) \\
      & \le C\beta T\int_{0}^{T}\ud r \int_{\mathbf{B}_2(0)}\ud z\:  g(r,z-r\lambda \mathbf{u}).
\end{align*}
Hence, choosing $\mathbf{u} = (1,0,\dots, 0)$, we have that
\begin{equation*}
  \begin{split}
    I_1 \le C \beta T\int_{0}^{T} \ud r
    \left[
      \int_{-2}^{2} \ud z_1 \: r^{-H} \exp\left(-\frac{(z_1-r\lambda)^2}{r^{2H}}\right)  \times
      \prod_{i = 2}^d \int_{-2}^{2}\ud z_i \: r^{-H}  \exp\left(-\frac{z_i^2}{r^{2H}}\right)
    \right].
  \end{split}
\end{equation*}
Notice that for all $r \geq 4/\lambda$, $z_1 \in [-2,2]$, we have $|z_1| \leq
r\lambda /2$ and thus
\begin{align*}
  \exp \left(-\frac{(z_1-r\lambda)^2}{r^{2H}}\right)
  \leq \exp\left(-\frac{1}{4} \lambda^2 r^{2(1-H)}\right).
\end{align*}
Therefore,
\begin{align*}
  \int_{-2}^{2} \ud z_1\: \exp  \left(-\frac{(z_1 - r\lambda)^2}{r^{2H}}\right)
  & \leq  \mathbf{1}_{\{0 \leq r < 4/\lambda \}} \int_{-2}^{2} \ud z_1\: \exp  \left(-\frac{(z_1-r\lambda)^2}{r^{2H}}\right) \\
  & \quad + 4 \times \mathbf{1}_{\{ r \geq 4/\lambda \}} \exp\left(-\frac{1}{4} \lambda^2 r^{2(1-H)}\right).
\end{align*}
If $\lambda \in (0,1]$, it follows that
\begin{align*}
\mathbf{1}_{\{0 \leq r < 4/\lambda \}} \int_{-2}^{2}\ud z_1\: \exp  \left(-\frac{(z_1-r\lambda)^2}{r^{2H}}\right)
 \leq & 4 \times \mathbf{1}_{\{0 \leq r < 4/\lambda \}} \\
 \leq & C_* \mathbf{1}_{\{0 \leq r < 4/\lambda \}} \exp\left(-\frac{1}{4} \lambda^2 r^{2(1-H)}\right),
\end{align*}
where
\begin{align*}
  C_* \coloneqq \sup_{\lambda \in (0,1]} \sup_{r \in (0,4/\lambda )} 4\exp\left(\frac{1}{4} \lambda^2 r^{2(1-H)}\right)
      = 4\exp \left(2^{2(1-2H)}\right).
\end{align*}
On the other hand, if $\lambda > 1$, then we can write
\begin{align*}
  \mathbf{1}_{\{0 \leq r < 4/\lambda \}} \int_{-2}^{2} \ud z_1 \: \exp \left(-\frac{(z_1-r\lambda)^2}{r^{2H}}\right)
   \leq & \mathbf{1}_{\{0 \leq r < 4/\lambda \}} \int_{-\infty}^{\infty}  \ud z_1 \: \exp \left(-\frac{(z_1-r\lambda)^2}{r^{2H}}\right) \\
    =   & C \mathbf{1}_{\{0 \leq r < 4/\lambda \}} r^{H}.
\end{align*}
Combining the above four cases shows that
\begin{align*}
  \int_{-2}^{2} \ud z_1 \: \exp\left(-\frac{(z_1-r\lambda)^2}{r^{2H}}\right)
  \leq C \left( \exp\left(- \frac{1}{4} \lambda^2 r^{2(1-H)}\right) + \mathbf{1}_{\{0 \leq r < 4/\lambda < 4\}} r^{H} \right).
\end{align*}
Notice that
\begin{align*}
  \int_{-2}^{2} \ud z \: r^{-H} \exp\left(-\frac{z^2}{r^{2H}}\right)
  \le \min\left(4 r^{-H}, \sqrt{\pi}\right) \le C \left(r^{-H} \wedge 1\right).
\end{align*}
Therefore, we can write
\begin{align*}
  I_1 \leq & C \beta T \int_0^T  \ud r \: r^{-H} \left(1 \wedge r^{- H}\right)^{d-1} \left( \exp\left(- \frac{1}{4}\lambda^2 r^{2(1-H)}\right) + \mathbf{1}_{\{0 \leq r < 4/\lambda < 4\}} r^{H} \right) \\
      \leq & C \beta T \left(I_{1,1} + I_{1,2} + I_{1,3} \right),
\end{align*}
where
\begin{align*}
  I_{1,1} & \coloneqq \int_0^1 \ud r\: r^{- H}\exp\left(-\frac{1}{4} \lambda^2 r^{2(1-H)}\right),      \\
  I_{1,2} & \coloneqq \int_1^{T} \ud r\: r^{- dH} \exp\left(-\frac{1}{4} \lambda^2 r^{2(1-H)}\right) , \\
  I_{1,3} & \coloneqq \mathbf{1}_{\{\lambda > 1\}}\int_0^{4/\lambda} \ud r  \leq C \left( 1\wedge \lambda^{-1}\right).
\end{align*}

Performing a change of variable $\frac{1}{4} \lambda^2 r^{2(1-H)} = s$, we can
write
\begin{align*}
  I_{1,1} = & C \lambda^{-1}\int_0^{\frac{\lambda^2}{4}}  s^{-\frac{1}{2}} e^{-s} \ud s \\
  \leq      & C \left( \mathbf{1}_{\{0 < \lambda \leq 1\}} \lambda^{-1}\int_0^{\frac{\lambda^2}{4}} \ud s \: s^{-\frac{1}{2}}  + \mathbf{ 1 }_{\{\lambda > 1\}} \lambda^{-1} \int_0^{\infty} \ud s \: s^{-\frac{1}{2}} e^{-s}  \right)
  \leq  C \left( 1\wedge \lambda^{-1}\right),
\end{align*}
and
\begin{align*}
  I_{1,2} = C \lambda^{-\frac{1- dH}{1 - H}}\int_{\frac{\lambda^2}{4}}^{\frac{1}{4}\lambda^2 T^{2(1 - H)} } \ud s \: s^{\frac{1 - dH}{2(1- H)}-1} e^{- s} .
\end{align*}
Therefore, we need only estimate $I_{1,2}$. Assume $T \geq 1$, otherwise
$I_{1,2} = 0$.

\bigskip\noindent\textit{Case I:~} If $dH < 1$, then $\frac{1- dH}{2(1 - H)} >
0$ and hence,
\begin{align*}
  I_{1,2}
  \leq C \lambda^{-\frac{1 - dH}{1 - H}} \int_0^{\infty}  \ud s  \: s^{\frac{1-dH}{2(1- H)} -1} e^{- s}
    =  C \lambda^{-\frac{1 - dH}{1 - H}}.
\end{align*}

\bigskip\noindent\textit{Case II:~} If $dH = 1$, then we have that $I_{1,2} \leq
C \left( \log T \wedge \lambda^{-2}\right)$, which is due to
\begin{align*}
  I_{1,2} & =   C \int_{\frac{\lambda^2}{4}}^{\frac{1}{4}\lambda^2 T^{2(1 - H)}} \ud s \: s^{- 1} e^{- s}
          \le   C \lambda^{-2} \int_{\frac{\lambda^2}{4}}^{\frac{1 }{4}  \lambda^2 T^{2 - 2H} } \ud s  \:  e^{- s}
          \le   C \lambda^{-2} \quad \text{and} \\
  I_{1,2} & \le C \int_{\frac{\lambda^2}{4}}^{\frac{1}{4}\lambda^2 T^{2(1 - H)}} \ud s \: s^{- 1}
          \le   C \left( \log \left( \frac{\lambda^2}{4} T^{2 - 2H} \right) - \log \left( \frac{\lambda^2}{4} \right) \right)
           =    C \log T.
\end{align*}

\bigskip\noindent\textit{Case III:~} If $d H > 1$, then we have
\begin{align*}
  I_{1,2}
  \leq C \min\left(\int_1^{\infty}\ud r \: r^{- dH} ,\: \lambda^{-2} \int_0^{\infty} \ud s  \: e^{-s} \right)
  =    C \left( 1\wedge \lambda^{-2}\right).
\end{align*}

As a consequence, with $T > e$, we can write
\begin{align}\label{E:Rate-I1}
  I_1 \leq I_1^*(\lambda) \coloneqq \begin{dcases}
    C \beta T \lambda^{-\frac{1 - dH}{1 - H}},                                   & dH < 1, \\
    C \beta T \left(\log(T)  \wedge \lambda^{-2} + 1 \wedge \lambda^{-1}\right), & dH = 1, \\
    C \beta T \left(1 \wedge \lambda^{-1}\right),                                & dH > 1.
  \end{dcases}
\end{align}

\bigskip\noindent\textbf{Upper bound for $I_2$.} As for $I_2$, we need the
Girsanov formula for martingales. Recall Theorem~\ref{T:mar} that for any unit
vector $\mathbf{u} \in \R^d$, $M = M^{\mathbf{u}}$ defined as in~\eqref{E:M} is
a square-integrable martingale. The classical Girsanov formula for martingales
implies that $\widetilde{M} \coloneqq \left\{\widetilde{M}_t = M_t + \langle M
\rangle_t \right\}$ is a martingale under the probability measure
$\mathbb{P}_T^{\lambda}$ given by~\eqref{E:p-lambda}. As a consequence,
\begin{align}\label{E:Rate-I2}
  I_2
  =   \mathbb{E}^{\mathbb{P}^\lambda_T} \left[\log\left(\mathcal{Q}_T(\lambda M)\right)\right]
  = & \mathbb{E}^{\mathbb{P}^\lambda_T} \left[\lambda M_t - \frac{1}{2} \lambda^2 \langle M \rangle_t\right]
  =   \mathbb{E}^{\mathbb{P}^\lambda_T} \left[\lambda \widetilde{M}_t + \frac{1}{2} \lambda^2 \langle M \rangle_t\right] \nonumber \\
  = & \frac{1}{2} \lambda^2 \mathbb{E}^{\mathbb{P}^\lambda_T} \left[\langle M \rangle_t\right] = \frac{1}{2} C_H \lambda^2 t^{2(1-H)} \eqqcolon I_2^*(\lambda).
\end{align}

\bigskip\noindent\textbf{Matching bounds for $I_1$ and $I_2$.~} Recall that
$\log (Z_t) \geq - (I_1 + I_2)$, and $I_1$ and $I_2$ are bounded by
$I_1^*(\lambda)$ in~\eqref{E:Rate-I1} and $I_2^*(\lambda)$ in~\eqref{E:Rate-I2},
respectively. Notably, $I_1^*$ is a decreasing function of $\lambda \in
(0,\infty)$, whereas $I_2^*$ is an increasing function in the same range.
Therefore, in order to optimize the lower bound for $\log (Z_T)$ based
on~\eqref{E:Rate-I1} and~\eqref{E:Rate-I2}, we need to find a suitable $\lambda$
such that $I_1^*(\lambda)$ and $I_2^*(\lambda)$ coincide up to a constant. In
the following, we omit the tedious computations required to identify the
appropriate $\lambda$, and choose\footnote{Here, we leverage the fact that for
  any $\beta > 0$, the expression $\beta^{\gamma_1} T^{\gamma_2}
  \log(T)^{\gamma_3}$ behaves like $T^{\gamma_2} \log(T)^{\gamma_3}$ for large
$T$, where $\gamma_1, \gamma_2, \gamma_3$ are arbitrary numbers, with the
constraint that $\gamma_2$ and $\gamma_3$ cannot both be zero.}
\begin{align*}
  \lambda = \begin{dcases}
  \beta^{\frac{1 - H}{3 - (d+2) H }} T^{- \frac{(1 - 2H)(1 - H)}{3 - (d + 2)H}},            & dH < 1,          \\
  \beta^{1/3} \mathbf{1}_{\{\beta\ge 1\}} +  \beta^{1/4} \mathbf{1}_{\{0 < \beta \leq 1\}}, & dH = 1, H = 1/2, \\
  \beta^{1/2} T^{H - 1/2} \sqrt{\log T},                                                    & dH = 1, H < 1/2, \\
  \beta^{1/2} T^{H - 1/2},                                                                  & dH > 1, H < 1/2, \\
  \beta^{1/3} T^{\frac{2H-1}{3}},                                                           & dH > 1, H \ge 1/2.
  \end{dcases}
\end{align*}
Then,~\eqref{E:logzt} follows immediately. The proof of Lemma~\ref{lmm:zt} is
complete. \qed

\section{Discussion}\label{Sec:illus}

In this section, we present some concrete examples and make some remarks on our
results. When $d=1$, our results are sharp. When $d \ge 2$, Theorem~\ref{T:main}
provides some nontrivial lower and upper bounds, as illustrated in
Table~\ref{Tb:exponents} and Figure~\ref{F:nu-H}. One may compare our results
with the Brownian motion case given in~\cite{hara.slade:91:critical}, as
detailed in the next example: \medskip

\begin{example}[Brownian motion case] In the Brownian motion case, as seen in
  Table~\ref{Tb:exponents} (a), if $H = 1/2$, we have
  \begin{align*}
    T^{1/d}\lesssim R_T \lesssim T, \quad \text{for large $T$.} 
  \end{align*}
  This partially confirms the guess
  in~\cite[Equation~(1.30)]{bauerschmidt.duminil-copin.ea:12:lectures} in
  dimension $2$, but is still far away from the ultimate conjecture that $R_T
  \asymp T^{3/4}$; see Equation~(1.28) (\textit{ibid.}). To the best of our
  understanding, the only known related finding is presented
  in~\cite{madras:14:lower}, where it is shown that for $d$-dimensional
  self-avoiding random walk with the radius $R_T \gtrsim T^{2/3d}$. It is
  noteworthy that a direct comparison between our outcome and that
  of~\cite{madras:14:lower} is not feasible, as the latter focused on
  self-avoiding random walk, not the self-repellent Brownian motion examined in
  this paper. Additionally, the distance in~\cite{madras:14:lower} pertains
  \textit{end-to-end distance}, which contrasts with that employed in our
  context.
\end{example}

\begin{table}[htpb]
  \centering

  \begin{subtable}{0.99\textwidth}
    \renewcommand{\arraystretch}{1.5}
    \newcommand{\shade}{\cellcolor{gray!30}}
    \begin{tabular}{|c|c|c|c|c|c|} \hline
                                     & $d=1$                     & $d=2$                 & $d=3$                   & $d= 4$                                  & $d\ge 5$       \\ \hline
      Lower bound                    & $1$                       & $1/2$        & $1/3$          & $1/4$                          & $1/d$ \\ \hline
      Conjectured (shaded)/confirmed & $1$                       & \shade $3/4$ & \shade $0.58759700(40)$ & \shade $1/2$ with $\log^{1/8}$ & $1/2$ \\ \hline
      Upper bound                    & \multicolumn{5}{c|}{$1$} \\ \hline
    \end{tabular}
    \caption{The Brownian motion case, i.e., $H=1/2$. The lower and upper bounds
    correspond to the exponent of $T$ in $\underline{R}_T$ and $\overline{R}_T$,
  as defined in~\eqref{E:R-bd}. The conjectured and confirmed values are taken
from~\cite[Table~1]{slade:19:self-avoiding}.}
  \end{subtable}
  \bigskip
  \bigskip

  \begin{subtable}{0.81\textwidth}
    \centering
    \renewcommand{\arraystretch}{1.5}
    \newcommand{\RegEqLe}{\cellcolor{red!20}}
    \newcommand{\RegEqEq}{\cellcolor{red!30}}
    \newcommand{\RegGeLe}{\cellcolor{cyan!35}}
    \newcommand{\RegGeGe}{\cellcolor{cyan!20}}
    \newcommand{\RegLe}{\cellcolor{gray!30}}
    \begin{tabular}{|c|c|c|c|c|c|c|}\hline
      $H$            & $d = 1$                & $d = 2$                                    & $d = 3$                                     & $d = 4$                                  & $d = 5$                                  & $d = 6$                                  \\ \hline
      $\sfrac{1}{4}$ & \RegLe $\sfrac{5}{6}$  & \RegLe   $[\sfrac{7}{16}, \sfrac{13}{16}]$ & \RegLe   $[\sfrac{13}{42}, \sfrac{11}{14}]$ & \RegEqLe $(\sfrac{1}{4}-, \sfrac{3}{4} +)$ & \RegGeLe $[\sfrac{1}{5}, \sfrac{3}{4}]$  & \RegGeLe $[\sfrac{1}{6}, \sfrac{3}{4}]$  \\
      $\sfrac{1}{3}$ & \RegLe $\sfrac{8}{9}$  & \RegLe   $[\sfrac{7}{15}, \sfrac{13}{15}]$ & \RegEqLe $(\sfrac{1}{3}-,   \sfrac{5}{6} + )$ & \RegGeLe $[\sfrac{1}{4}, \sfrac{5}{6}]$  & \RegGeLe $[\sfrac{1}{5}, \sfrac{5}{6}]$  & \RegGeLe $[\sfrac{1}{6}, \sfrac{5}{6}]$  \\
      $\sfrac{1}{2}$ & \RegLe $1$             & \RegEqEq $[\sfrac{1}{2},  1]$              & \RegGeGe $[\sfrac{1}{3},   1]$              & \RegGeGe $[\sfrac{1}{4}, 1]$             & \RegGeGe $[\sfrac{1}{5}, 1]$             & \RegGeGe $[\sfrac{1}{6}, 1]$             \\
      $\sfrac{2}{3}$ & \RegLe $\sfrac{10}{9}$ & \RegGeGe $[\sfrac{5}{9},  \sfrac{10}{9}]$  & \RegGeGe $[\sfrac{10}{27}, \sfrac{10}{9}]$  & \RegGeGe $[\sfrac{5}{18},\sfrac{10}{9}]$ & \RegGeGe $[\sfrac{2}{9}, \sfrac{10}{9}]$ & \RegGeGe $[\sfrac{5}{27},\sfrac{10}{9}]$ \\
      $\sfrac{3}{4}$ & \RegLe $\sfrac{7}{6}$  & \RegGeGe $[\sfrac{7}{12}, \sfrac{7}{6}]$   & \RegGeGe $[\sfrac{7}{18},  \sfrac{7}{6}]$   & \RegGeGe $[\sfrac{7}{24},\sfrac{7}{6}]$  & \RegGeGe $[\sfrac{7}{30},\sfrac{7}{6}]$  & \RegGeGe $[\sfrac{7}{36},\sfrac{7}{6}]$  \\ \hline
    \end{tabular}

    \caption{The ranges (when $d\ge 2$) and the exact values (when $d=1$) for
      various values of $H$ and $d$. A gray background indicates cases with $dH
      < 1$. Red highlights the scenario where $dH = 1$ with darker one for the
      case $d=2$ and lighter one for the cases $d\ge 3$. Cyan represents cases
    where $dH > 1$ with darker one for the cases $H < 1/2$ and lighter one for
  the cases $H\ge 1/2$.}
   \label{Tb:dH}
  \end{subtable}

  \caption{Exponents of $T$ in $R_T$, as define in~\eqref{E:GT}, for large
  $T$.}\label{Tb:exponents}
\end{table}

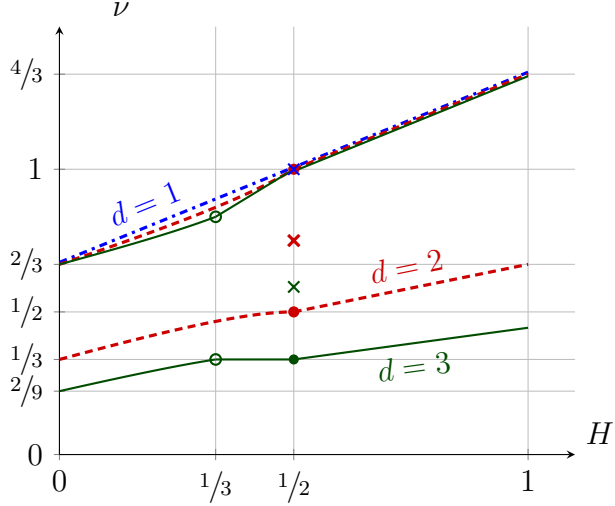
\begin{figure}[htpb]
  \centering
  \begin{tikzpicture}[mark size=1.5pt]

    \begin{axis}[
        xmin=0, xmax=1.1,
        ymin=0, ymax=1.5,
        axis lines=left,
        xlabel={$H$},
        xtick ={0, 1/3, 1/2, 1},
        xticklabels ={$0$, $\sfrac{1}{3}$, $\sfrac{1}{2}$, $1$},
        ylabel ={$\nu$},
        ytick ={0, 2/9,  1/3, 1/2, 2/3, 1, 4/3},
        yticklabels ={$0$, $\sfrac{2}{9}$, $\sfrac{1}{3}$, $\sfrac{1}{2}$, $\sfrac{2}{3}$, $1$, $\sfrac{4}{3}$},
        xlabel style={at={(axis description cs:1,0.05)},anchor=west},
        ylabel style={at={(axis description cs:0.12,1)},anchor=south, rotate = -90},
        grid=major,
    ]
    \def\shift{0.007}

    \addplot[blue, very thick, domain=0:1, samples=100, dashdotted] {2/3*(1 + x)+\shift} node[pos = 0.2, above, sloped] {$d=1$};

    \addplot[red!80!black, very thick, domain=0:0.5, samples=100, densely dashed, no marks] {(1 - 2*x^2)/(3 - 4*x)};
    \addplot[red!80!black, very thick, domain=0.5:1, samples=100, densely dashed, no marks] {(1 + x)/3} node [midway, above, sloped] {$d=2$};
    \addplot[red!80!black, very thick, mark=*, only marks] coordinates {(0.5,0.5)};

    \addplot[red!80!black, very thick, domain=0:0.5, samples=100, densely dashed, no marks] {(2 - x - 2*x^2)/(3 - 4*x)};
    \addplot[red!80!black, very thick, domain=0.5:1, samples=100, densely dashed, no marks] {2/3*(1 + x)};
    \addplot[red!80!black, very thick, mark=*, only marks] coordinates {(0.5,1)};
    \addplot[red!80!black, very thick, mark=x, only marks, mark size = 3pt] coordinates {(0.5,3/4)};

    \addplot[green!30!black, thick, domain=0:1/3, samples=100, no marks] {1/3*(2 - 6*x^2)/(3 - 5*x)};
    \addplot[green!30!black, thick, domain=1/3:1/2, samples=2, no marks] {1/3};
    \addplot[green!30!black, thick, domain=1/2:1, samples=100, no marks] {2/9*(1 + x)} node [midway, below, sloped] {$d=3$};
    \addplot[green!30!black, thick, mark=o, only marks, mark size = 2pt] coordinates {(1/3,1/3)};
    \addplot[green!30!black, thick, mark=*, only marks] coordinates {(1/2,1/3)};

    \addplot[green!30!black, thick, domain=0:1/3, samples=100, no marks] {(2 - 2*x - 2*x^2)/(3 - 5*x)};
    \addplot[green!30!black, thick, domain=1/3:1/2, samples=2, no marks] {(1 + 2*x)/2};
    \addplot[green!30!black, thick, domain=1/2:1, samples=100, no marks] {2/3*(1 + x)-\shift};
    \addplot[green!30!black, thick, mark=o, only marks, mark size = 2pt] coordinates {(1/3,5/6)};
    \addplot[green!30!black, thick, mark=x, only marks, mark size = 3pt] coordinates {(0.5,0.58759700)};

    \addplot[blue, thick, mark=x, only marks, mark size = 3pt] coordinates {(0.5,1)};

    \end{axis}

  \end{tikzpicture}

  \caption{Plots of the exponent $\nu$ for $d=1$ (the blue dash-dotted line),
    $2$ (two red dashed lines, one for the upper bound, the other for the lower
    bound) and $3$ (two green solid lines). The conjectured and confirmed values
    of $\mu$ when $H=1/2$ are labeled by the cross mark (``x'') for the cases
  $d=1$ in blue (confirmed), $d=2$ in red, and $d=3$ in green. The two green
circles for $d=3$ at $H=1/3$ refer to the case that exponent is subject to
logarithm corrections.}

  \label{F:nu-H}
\end{figure}

\begin{remark}
  The Hurst parameter $H$ does not need to be the same for each coordinate. The
  same strategy presented in this paper can be readily applied to other cases.
  Let $H_1,\dots, H_d$ denote the Hurst parameter of $B^{H,1},\dots, B^{H,d}$,
  respectively. Then, the results in Theorem~\ref{T:main} still hold, with
  parameters depending on $H_1 + \dots + H_d$, $\max \{H_1,\dots, H_d\}$, and
  $\min \{H_1,\dots, H_d\}$. For the sake of conciseness, we refrain from
  delving into the specifics in this paper.
\end{remark}

\begin{remark}
  Lemma~\ref{lmm:zt} provides a sharp bound for the Brownian case $H = 1/2$.
  With $B = B^{1/2}$ denoting the $d$-dimensional Brownian motion, we can write
  \begin{align*}
    \int_{\R^d} L_T(y)^2 \ud y
    =    & \int_0^T \ud t_1 \int_0^T \ud t_2 \int_{\R^d} \ud y \: \mathbf{1}_{\mathbf{B}_1(y)} (B_{t_1}) \mathbf{1}_{\mathbf{B}_1(y)} (B_{t_2}) \\
    =    & \int_0^T \ud t_1 \int_0^T \ud t_2 \: \left| \mathbf{B}_1(B_{t_1}) \cap \mathbf{B}_1(B_{t_2}) \right|                                 \\
    \geq & \sum_{k = 0}^{\lfloor T \rfloor} \int_{k}^{k+1} \ud t_1 \int_{k}^{k+1}  \ud t_2 \: \left| \mathbf{B}_1(B_{t_1}) \cap \mathbf{B}_1(B_{t_2}) \right| .
  \end{align*}
  Since $\left| \mathbf{B}_1(B_{t_1}) \cap \mathbf{B}_1(B_{t_2}) \right|$ is a
  non-negative function of $B_{t_1} - B_{t_2}$, the summands in above expression
  are i.i.d. random variables. As a result, with $f (B_{t_1} - B_{t_2})
  \coloneqq \left| \mathbf{B}_1(B_{t_1}) \cap \mathbf{B}_1(B_{t_2}) \right|$,
  \begin{align*}
    Z_T = & \mathbb{E}^{\mathbb{P}} \left[\exp \left( - \beta \int_{\R^d} \ud y \: L_T(y)^2 \right) \right]                                                                            \\
    \leq  & \mathbb{E}^{\mathbb{P}} \left[ \exp \left( - \beta \sum_{k = 0}^{\lfloor T \rfloor} \int_{k}^{k+1} \ud t_1 \int_{k}^{k+1} \ud t_2 \: f (B_{t_1} - B_{t_2}) \right) \right] \\
     =    & \prod_{k = 0}^{\lfloor T \rfloor} \mathbb{E}^{\mathbb{P}} \left[\exp \left( - \beta  \int_{k}^{k + 1} \ud t_1 \int_k^{k + 1} \ud t_2 \:  f (B_{t_1} - B_{t_2}) \right) \right] = e^{F_{\beta}\lfloor T \rfloor},
  \end{align*}
  where
  \begin{align*}
    F_{\beta} \coloneqq \log \mathbb{E}^{\mathbb{P}} \left[\exp \left( - \beta  \int_0^1 \ud t_1 \int_0^1 \ud t_2 \:  f (B_{t_1} - B_{t_2}) \right) \right] < 0.
  \end{align*}
  In other words, $Z_T \leq \exp(- C T)$ with some $C>0$ for all $T > 1$, and
  combining Lemma~\ref{lmm:zt}, we see that $\log Z_T \asymp - T$ as $T \to
  \infty$. Therefore, the lack of sharpness in Theorem~\ref{T:main} is likely
  attributed to the estimates in Lemma~\ref{lmm:q}. We expect that this aspect
  can be resolved in future research.
\end{remark}

\appendix

\section{Fractional Brownian motions and Girsanov theorem}

In this section, we present some preliminaries about stochastic calculus for
fBm's. For a more detailed account of this topic, we refer the interested
readers to~\cite{biagini.hu.ea:08:stochastic}. \medskip

A $d$-dimensional stochastic process $B^H = \left\{\left(B^{H,1}_t,\dots,
B^{H,d}_t\right) \colon t\in \mathbb{R}_+\right\}$ is a called a
\textit{fraction Brownian motion} (\textit{fBm}) with the Hurst parameters $H
\in (0,1)$ on a probability space $(\Omega, \mathcal{F}, \mathbb{P})$, if
\begin{enumerate}[(i)]
  \item $B^{H,i}$, $i=1,\dots,d$, are independent;
  \item for each $1\leq i\leq d$, $\left\{B^{H,i}_t \colon t \in
    \mathbb{R}_+\right\}$ is a centered Gaussian family with covariance
  \begin{align*}
    \mathbb{E} \left[ B^{H,i}_t B^{H,i}_s\right]
    = \frac{1}{2}\left( t^{2H} + s^{2H} - |t-s|^{2H}\right).
  \end{align*}
\end{enumerate}
Without loss of generality, we can assume that the filtration
$\left\{\mathcal{F}_t \colon t \in \mathbb{R}_+\right\}$ is the canonical
filtration generated by $B^H$.

Next, we define the integration of deterministic functions against fBm's. If
$\phi$ is a smooth function on $\mathbb{R}_+$ with compact support, i.e., $\phi
\in C_c^{\infty}(\mathbb{R}_+)$, then the integrals
\begin{align*}
   B^{H,i}(\phi) \coloneqq \int_0^{\infty} \phi(t) \ud B^{H,i}_t, \qquad i = 1,\dots, d,
\end{align*}
are centered Gaussian random variables with the following covariance structure:
\begin{align*}
  \mathbb{E} \left[B^{H,i}(\phi) B^{H,j}(\psi) \right]
  = \begin{dcases}
  0,                                                                   & i \neq j, \\
  H(2H-1)\iint_{\R_+^2} \ud t \: \ud s\: \phi(s)\psi(t) |t-s|^{2H - 2}, & i = j,
  \end{dcases}
\end{align*}
for all $1\leq i,j \leq d$, and $\phi, \psi \in C_c^{\infty}(\mathbb{R}_+).$

By typical approximation arguments, one can extend the integration to the
Hilbert space $\mathcal{H}$ of functions on $\mathbb{R}_+$, with inner product,
\begin{align*}
  \InPrd{\phi, \psi}_{\mathcal{H}}
  \coloneqq \iint_{\R_+^2} \ud t \: \ud s\: \phi(s)\psi(t) |t-s|^{2H - 2}.
\end{align*}
In particular, for any $t\in \mathbb{R}_+$, the function $w(t,\cdot )$, given by
\begin{gather*}
  w (t,s) \coloneqq c_{1}\, s^{1/2 - H} (t - s)^{1/2 - H} \mathbf{1}_{(0,t)} (s),\quad \text{for all } s\in \mathbb{R},
\end{gather*}
is an element of $\mathcal{H}$ (see~\cite[Proposition
2.1]{norros.valkeila.ea:99:elementary}), where, with $B(\cdot,\cdot)$ denoting
the \textit{Beta function},
\begin{gather*}
  c_{1} \coloneqq \left[ 2 H \times B \left(\sfrac{3}{2} - H, \sfrac{1}{2} + H\right)\right ]^{-1}.
\end{gather*}
This observation allows us to define the following Gaussian process $M =
M^{\mathbf{u}} = \{ M_t \colon t \in \R_+\}$ with parameter $\mathbf{u} =
(u_1,\dots, u_d)$ being a unit vector in $\R^d$ as follows:
\begin{align}\label{E:M}
  M_t \coloneqq \sum_{i=1}^d u_i \int_0^t w(t,s) \ud B^{H,i}_s, \quad \text{for all } t\in \mathbb{R}_+.
\end{align}
The following theorem is a straightforward extension
of~\cite[Theorem~3.1]{norros.valkeila.ea:99:elementary} from $d=1$ to higher
dimensional cases, thanks to the independence of the components of $B^H$:
\medskip

\begin{theorem}\label{T:mar}
  Let $B^H$ be a $d$-dimensional fBm with $H \in (0,1)$, let $\mathbf{u}$ be a
  unit vector in $\R^d$, and let $M = M^{\mathbf{u}}$ be the Gaussian process
  given as in~\eqref{E:M}. Then $M$ is a square-integrable martingale with
  quadratic variation
  \begin{align*}
    \InPrd{M, M}_t
    = C_H t^{2(1 - H)},\ \forall t\in \R_+, \quad \text{where}\quad
      C_H \coloneqq \frac{\Gamma (\sfrac{3}{2} - H)}{4 H (1 - H)\Gamma(\sfrac{1}{2} + H) \Gamma(2 - 2H)}.
  \end{align*}
\end{theorem}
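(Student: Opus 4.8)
The plan is to reduce the $d$-dimensional statement to the one-dimensional result of Norros, Valkeila, and Virtanen, exploiting the independence of the coordinate processes. Writing $M^i_t \coloneqq \int_0^t w(t,s)\ud B^{H,i}_s$ for each $1 \le i \le d$, we have $M_t = \sum_{i=1}^d u_i M^i_t$. The cited theorem, in its one-dimensional form, asserts that each $M^i$ is a square-integrable Gaussian martingale, with respect to the filtration $\mathcal{F}^i$ generated by $B^{H,i}$, whose quadratic variation is $\langle M^i\rangle_t = C_H t^{2(1-H)}$. The genuinely subtle point, already handled there, is that the integrand $w(t,s)$ depends on the terminal time $t$, so the martingale property is not automatic; I would treat this one-dimensional fact as a black box.

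First I would upgrade each $M^i$ to a martingale in the full filtration $\{\mathcal{F}_t\}$ generated by $B^H = (B^{H,1},\dots,B^{H,d})$. Since $M^i_t$ is measurable with respect to $\sigma(B^{H,i}_s \colon s \le t)$ and $B^{H,i}$ is independent of the remaining coordinates, conditioning on the extra information does not change the conditional expectation; hence $\mathbb{E}[M^i_t \mid \mathcal{F}_s] = \mathbb{E}[M^i_t \mid \mathcal{F}^i_s] = M^i_s$ for $s < t$. Being a finite linear combination of $\mathcal{F}_t$-martingales, $M_t = \sum_i u_i M^i_t$ is then an $\mathcal{F}_t$-martingale. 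Square-integrability follows because each $M^i_t$ is centered Gaussian with variance $C_H t^{2(1-H)}$, so $M_t$ is itself centered Gaussian with finite variance.

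For the quadratic variation I would use that distinct coordinates are orthogonal. Because $B^{H,i}$ and $B^{H,j}$ are independent for $i \ne j$, the martingales $M^i$ and $M^j$ are independent, whence $\langle M^i, M^j\rangle \equiv 0$. By bilinearity of the bracket,
\begin{equation*}
  \langle M, M\rangle_t = \sum_{i,j=1}^d u_i u_j \langle M^i, M^j\rangle_t = \sum_{i=1}^d u_i^2 \langle M^i\rangle_t = \Big(\sum_{i=1}^d u_i^2\Big) C_H t^{2(1-H)} = C_H t^{2(1-H)},
\end{equation*}
where the last equality uses $|\mathbf{u}| = 1$.

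The only real obstacle is the one-dimensional martingale property, namely that the time-dependent kernel $w(t,\cdot)$ still produces a martingale; this is precisely the content of the cited theorem and is not reproved here. Everything specific to the multidimensional extension, namely the martingale property in the enlarged filtration and the vanishing of the cross-brackets, reduces to the independence of the coordinate fBm's and is routine.
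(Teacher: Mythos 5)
Your proposal is correct and follows exactly the route the paper intends: the paper gives no detailed proof, asserting only that Theorem~\ref{T:mar} is ``a straightforward extension of~\cite[Theorem~3.1]{norros.valkeila.ea:99:elementary} from $d=1$ to higher dimensional cases, thanks to the independence of the components,'' and your argument---treating the one-dimensional fundamental-martingale result as a black box, upgrading each $M^i$ to the joint filtration by independence, and killing the cross-brackets $\langle M^i, M^j\rangle$ so that $\langle M\rangle_t = \sum_i u_i^2\, C_H t^{2(1-H)} = C_H t^{2(1-H)}$---is precisely the spelled-out version of that remark.
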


For any $\lambda > 0$ and $T > 0$, denote
\begin{align*}
  \mathcal{Q}_T (M) \coloneqq \exp \left( M_T - \frac{1}{2} \langle M, M \rangle_T \right),
\end{align*}
and let $\mathbb{P}_T^{\lambda} = \mathbb{P}_T^{\lambda, \mathbf{u}}$ be a
probability measure on $(\Omega, \mathcal{F}_T)$ that is equivalent to
$\mathbb{P}_T$ with the Radon--Nikodym derivative
\begin{align}\label{E:p-lambda}
  \frac{\ud \mathbb{P}_T^{\lambda}}{\ud \mathbb{P}_T} \coloneqq \mathcal{Q}_T(\lambda M).
\end{align}

The next theorem, a \textit{Girsanov formula for fBm's}, is a straightforward
extension of~\cite[Theorem~4.1]{norros.valkeila.ea:99:elementary}.

\begin{theorem}\label{T:gir}
  Under probability $\mathbb{P}^{\lambda}_T$, the process $\left\{B^H_t \colon 0
  \leq t \leq T\right\}$ is a $d$-dimensional fBm with a drift
  $\lambda\mathbf{u}\in\R^d$, i.e., the distribution of the process $B^H$ up to
  time $T$ under $\mathbb{P}^{\lambda}_T = \mathbb{P}^{\lambda,\mathbf{u}}_T$ is
  the same as $ B^{H, \lambda, \mathbf{u}, T} = \left\{B^H_t + \lambda t\,
  \mathbf{u}  \:\colon 0\leq t\leq T\right\}$ under $\mathbb{P}_T$.
\end{theorem}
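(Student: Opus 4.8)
The plan is to reduce the $d$-dimensional Girsanov formula to its one-dimensional counterpart (the $d=1$ case of~\cite{norros.valkeila.ea:99:elementary}), exploiting the independence of the coordinates of $B^H$ together with the fact that the change of measure acts only along $\mathbf{u}$. The key observation is that the driving martingale sees $B^H$ only through its projection onto $\mathbf{u}$: setting $\tilde B_t \coloneqq \InPrd{\mathbf{u}, B^H_t} = \sum_{i=1}^d u_i B^{H,i}_t$, the independence of the components together with $|\mathbf{u}|=1$ gives that $\tilde B$ is a one-dimensional fBm with Hurst parameter $H$, while by linearity of the integral in~\eqref{E:M} we have $M_t = \int_0^t w(t,s)\,d\tilde B_s$. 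Hence $M$ is precisely the fundamental martingale attached to the scalar fBm $\tilde B$, and the Radon--Nikodym density $\mathcal{Q}_T(\lambda M)$ of~\eqref{E:p-lambda} is $\sigma(\tilde B)$-measurable (it is a genuine probability density since the bracket $\InPrd{M,M}_T = C_H T^{2(1-H)}$ is a finite constant, so Novikov's condition is trivial).

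First I would write $B^H_t = \tilde B_t\,\mathbf{u} + Y_t$, where $Y_t \coloneqq B^H_t - \tilde B_t\,\mathbf{u}$ is the part of $B^H_t$ orthogonal to $\mathbf{u}$. For any $\mathbf{v} \perp \mathbf{u}$ the computation $\mathbb{E}^{\mathbb{P}_T}\big[\InPrd{\mathbf{u}, B^H_s}\InPrd{\mathbf{v}, B^H_t}\big] = \InPrd{\mathbf{u}, \mathbf{v}}\cdot\tfrac12\big(s^{2H}+t^{2H}-|t-s|^{2H}\big) = 0$ shows that $\tilde B$ and $Y$ are uncorrelated; being jointly Gaussian under $\mathbb{P}_T$, they are therefore independent.

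Next I would transfer the change of measure. Since the density $\mathcal{Q}_T(\lambda M)$ is a function of $\tilde B$ alone, for bounded measurable $g,h$ the independence of $\tilde B$ and $Y$ yields $\mathbb{E}^{\mathbb{P}_T^\lambda}[g(\tilde B)h(Y)] = \mathbb{E}^{\mathbb{P}_T}[g(\tilde B)\mathcal{Q}_T(\lambda M)]\,\mathbb{E}^{\mathbb{P}_T}[h(Y)]$; taking $g\equiv 1$ shows that $Y$ keeps its $\mathbb{P}_T$-law, and the factorization then shows that $Y$ stays independent of $\tilde B$ under $\mathbb{P}_T^\lambda$. For the $\mathbf{u}$-component, applying the one-dimensional Girsanov formula to $\tilde B$ (whose fundamental martingale is exactly $M$) shows that under $\mathbb{P}_T^\lambda$ the process $\tilde B$ has the law of a one-dimensional fBm with drift $\lambda t$. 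Reassembling, under $\mathbb{P}_T^\lambda$ we have $B^H_t = (\tilde B_t + \lambda t)\mathbf{u} + Y_t$ with $\tilde B + \lambda t$ and $Y$ independent and distributed, respectively, as the drifted $\mathbf{u}$-component and the orthogonal component of a fBm; since the map from (parallel component, orthogonal component) to $B^H$ is a fixed linear isometry, this is exactly the law of $\{B^H_t + \lambda t\,\mathbf{u}\}$ under $\mathbb{P}_T$, as claimed.

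The substantive content is the one-dimensional step, and this is where I expect the only real obstacle to lie. If one prefers a self-contained argument to citing the $d=1$ theorem, the route is: by the classical Girsanov theorem for the continuous Gaussian martingale $M$, under $\mathbb{P}_T^\lambda$ the process $M_t - \lambda\InPrd{M,M}_t$ is again a Gaussian martingale with the same deterministic bracket $C_H t^{2(1-H)}$; pushing this through the deterministic linear map that recovers $\tilde B$ from $M$, the centered part reproduces a centered fBm while the drift contributes a deterministic shift of the form $\lambda\int_0^t \hat w(t,s)\,d\InPrd{M,M}_s$. The crux is to verify that this shift equals exactly $\lambda t$, which is where the precise normalizing constants $c_1$ in $w$ and $C_H$ in Theorem~\ref{T:mar} must cancel; this identity is the heart of the $d=1$ result, and the passage to general $d$ adds nothing beyond the coordinate bookkeeping recorded above.
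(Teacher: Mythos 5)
Your proof is correct and takes essentially the same approach as the paper: the paper establishes Theorem~\ref{T:gir} with no written argument at all, simply citing \cite[Theorem~4.1]{norros.valkeila.ea:99:elementary} and remarking that the extension from $d=1$ is straightforward thanks to the independence of the components, and your argument is precisely that reduction carried out in full (the projection $\tilde B_t = \InPrd{\mathbf{u}, B^H_t}$ being a scalar fBm whose fundamental martingale is $M$, the Gaussian independence of the orthogonal part $Y$, the $\sigma(\tilde B)$-measurability of the density $\mathcal{Q}_T(\lambda M)$, and the one-dimensional Girsanov theorem followed by reassembly). The only difference is thoroughness: you supply the coordinate bookkeeping and the measure-transfer factorization that the paper leaves implicit.
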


\section*{Acknowledgments}

L.~C. and P.~X. are partially supported by NSF grant DMS-2246850 and L.~C. is
partially supported by a collaboration grant (\#959981) from the Simons
foundation.

\end{document}